\documentclass[11pt]{article}
\usepackage[latin1]{inputenc}
\usepackage[T1]{fontenc}
\usepackage{authblk} 
\usepackage{amsmath}
\usepackage{amsfonts}
\usepackage{amssymb}
\usepackage[normalem]{ulem} 
\usepackage{amsmath,xcolor,ulem}
\usepackage{amsfonts}
\usepackage{amssymb}
\usepackage{amsthm}
\usepackage{graphicx}
\usepackage{marginnote}
\usepackage{subcaption}

\usepackage[colorlinks=true, allcolors=blue]{hyperref}
\usepackage[top=2.5cm,bottom=2.5cm,left=2.5cm,right=2.5cm]{geometry}
\usepackage{float}

\usepackage[algoruled]{algorithm2e}
\usepackage[font=footnotesize,width=\linewidth]{caption} 
\usepackage{graphicx}
\usepackage{amssymb}
\usepackage{amsthm}
\usepackage{amsmath}
\usepackage{tikz} 
\usepackage{lineno}
\usepackage{hyperref}
\usepackage{verbatim}

\newtheorem{theorem}{Theorem}[section]
\newtheorem{lemma}[theorem]{Lemma}
\theoremstyle{definition}

\theoremstyle{remark}
\newtheorem{remark}[theorem]{Remark}

\numberwithin{equation}{section}

\DeclareMathOperator{\interior}{int}
\DeclareMathOperator{\trace}{tr}
\DeclareMathOperator{\err}{err}

\usepackage{graphicx, pdflscape}
\usepackage{amsmath, hyperref}
 \usepackage{amssymb} 
\usepackage{relsize, float, lineno}
 
\usepackage{tikz} 
 
 \usepackage{xcolor} 
\usepackage{subcaption}
\allowdisplaybreaks

\title{A Generalized Hacker Dynamics Model with Nonlinear Incidence: Analysis and Positivity-Preserving Numerical Simulation}

\author[1]{Manh Tuan Hoang\footnote{\href{mailto:tuanhm16@fe.edu.vn}{tuanhm16@fe.edu.vn}}}
\author[2,3]{Hoai Thu Pham \footnote{\href{mailto: phamthuhvan@gmail.com}{phamthuhvan@gmail.com}}}
\author[4]{Matthias Ehrhardt\footnote{ \href{mailto:ehrhardt@uni-wuppertal.de}{ehrhardt@uni-wuppertal.de} (corresponding author)} }
\affil[1]{Department of Mathematics, FPT University, Hoa Lac Hi-Tech Park, Km29 Thang Long Blvd, Hanoi, Viet Nam}
\affil[2]{People's Security Academy, Hanoi, Viet Nam}
\affil[3]{Graduate University of Science and Technology, Vietnam Academy of Science and Technology, Hanoi, Viet Nam}
\affil[4]{IMACM, School of Mathematics and Natural Sciences, University of Wuppertal, Germany}
\begin{document}
\maketitle

\begin{tikzpicture}[remember picture,overlay]
\node[anchor=north east,inner sep=20pt] at (current page.north east)
	{\includegraphics[scale=0.2]{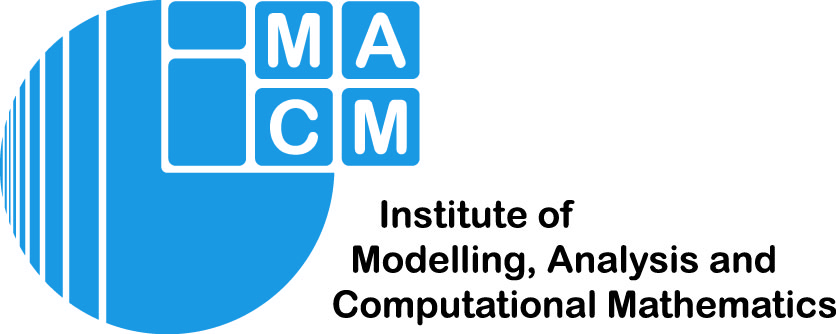}};
\end{tikzpicture}

\begin{abstract}
We propose and analyze a generalized compartment model for hacker dynamics in cybersecurity systems. This model is an extension of a recently introduced framework, replacing the bilinear interaction term with a broad class of nonlinear incidence functions. This provides greater modeling flexibility and allows for the description of a wider range of cyber-propagation and information-spreading processes.

First, we investigate the qualitative dynamics of the model. We establish the positivity and boundedness of solutions, derive the basic reproduction number, and characterize the existence of hacker-free and hacker-present equilibria. We obtain local and global asymptotic stability results, yielding a complete description of the global dynamics in terms of the basic reproduction number.

Next, we develop a second-order, positivity-preserving, nonstandard finite difference (NSFD) scheme for numerical simulations. Unlike many existing NSFD approaches, which are typically first-order accurate, the proposed method achieves second-order convergence while preserving positivity for arbitrary step sizes. Furthermore, the proposed method reproduces the asymptotic stability properties of the continuous model, making it suitable for long-time simulations.

Numerical experiments validate the theoretical results and demonstrate the superior accuracy and qualitative performance of the proposed scheme compared to several first-order methods.

\end{abstract}

\begin{minipage}{0.9\linewidth}
 \footnotesize
\textbf{AMS classification:} 34C60, 37N99, 65L05, 65Z05.

\medskip\noindent
\textbf{Keywords:} 
Compartment epidemic models; Hacker dynamics; Cybersecurity systems; Nonstandard finite difference; Positivity-preserving; Second-order

\end{minipage}

\section{Introduction}\label{intro}
Compartment models have several useful applications in both theory and practice, particularly in mathematical epidemiology \cite{Allen, Angstmann, Brauer, Kermack-McKendrick, Kermack-McKendrick1, Kermack-McKendrick2, Martcheva}. 
One of the earliest compartmental epidemic models, often used to introduce the mathematical modeling of epidemiology, was proposed by Kermack and McKendrick in 1927 \cite{Kermack-McKendrick} (see, also, \cite{Martcheva}). 
This model is represented by
\begin{equation}\label{eq:SIR}
\begin{split}
\dfrac{dS(t)}{dt} &= -\beta S(t) I(t),\\
\dfrac{dI(t)}{dt} &= \beta S(t) I(t) - \alpha I(t),\\
\dfrac{dR(t)}{dt} &= \alpha R(t),
\end{split}
\end{equation}
where $S(t)$, $I(t)$ and $R(t)$ denote the number of individuals in the susceptible, infected and recovered compartments, respectively; $\beta$ and $\alpha$ are called the transmission and recovery rate constants, respectively.

Over the past several decades, epidemiological compartmental models have been widely used to study information and malware propagation. These models have become an effective approach with several useful applications in this field 
(see, for example, \cite{Daley, delRey, Dietz, Ding, Goffman, Kawachi, Murray, Piqueira1, Piqueira2, Piqueira3, Piqueira4, Zhu} and references therein).  
Early studies analyzed and discussed the strong similarity between epidemic spreading and the propagation of information and malware \cite{Daley, Dietz, Goffman, Murray}. 
Subsequently, epidemiological models were developed to study issues related to the propagation of information and malware. 
Extended versions of the classical epidemic models \cite{Kermack-McKendrick, Kermack-McKendrick1, Kermack-McKendrick2} have been formulated to study the propagation of information and malware on the Internet 
 \cite{Kawachi, Piqueira1, Piqueira2, Piqueira3, Piqueira4, Zhu}.

In a recent survey \cite{HoangMatthias}, Hoang and Ehrhardt presented developments in mathematical modeling and applications of differential equations to infectious diseases. 
They focused on mathematical models, qualitative analysis, numerical methods, and practical applications.
They also mentioned and discussed the application of epidemiological models to the study of information and malware propagation.

In this work, we revisit a compartment model of hacker behavior in cybersecurity systems, which was proposed in a recent work \cite{Hassouni}.
The model analyzes the influence of black hat hackers on Internet users.
In the mathematical model, Internet users are classified into three classes defined as follows:
\begin{itemize}
\item Class $C$: This class consists of potential Internet users influenced by positive and negative factors related to Internet use.
\item Class $H_n$ (negative): This class represents \textit{black hat hackers} involved in illegal Internet activities, including stealing personal information and bank account data, conducting cyber espionage, and attacking websites.
\item Class $H_b$ (benefit): These are \textit{white hat hackers} who contribute positively to cyberspace by detecting vulnerabilities in websites and cooperating with the relevant authorities to enhance website security.
\end{itemize}
Based on some hypotheses inspired by real-world applications (see \cite[Fig.~1]{Hassouni}), 
the following system of ordinary differential equations (ODEs) was proposed to model the hacker dynamics
\begin{equation}\label{eq:1}
\begin{split}
 \dfrac{d C(t)}{d t} &= \Lambda-\beta C(t) H_n(t)-\alpha C(t)+\gamma H_n(t)-\mu C(t), \quad C(0) > 0 \\
 \dfrac{d H_n(t)}{d t} &= \beta C(t) H_n(t)-\gamma H_n(t)-\theta H_n(t)-\mu H_n(t),\quad H_n(0) \ge 0\\
 \dfrac{d H_b(t)}{d t} &= \alpha C(t)+\theta H_n(t)-\mu H_b(t), \quad H_b(0) \ge 0,
\end{split}
\end{equation}
where all the parameters are assumed to be positive. Their meanings were described in \cite[Table~1]{Hassouni}.

A rigorous mathematical analysis of \eqref{eq:1} was performed in \cite{Hassouni}. 
Specifically, the invariant set, the basic reproduction number $\mathcal{R}_0$, the set of equilibrium points, and their local and global asymptotic stability were determined. 
Additionally, a sensitivity analysis of $\mathcal{R}_0$ and an optimal control problem were investigated.

As we recall from \cite{Martcheva}, the function $\lambda(t) := \beta I(t)$ in \eqref{eq:SIR} is called the force of infection. 
There are several types of incidence depending on the assumptions about the form of the force of infection. 
One commonly used form is the \textit{mass action incidence} $\beta SI$. 
This interaction term increases linearly with the number of infected individuals. 
However, Capasso and Serio \cite{Capasso} pointed out that the mass action incidence may not be realistic in real-world applications. 
For this reason, several studies (see, for instance, \cite{ConnellMcCluskey, Sun, Tian, Xu, Zhang} and references therein), have considered nonlinear incidence functions to replace the mass action incidence. 
More precisely, the bilinear function $f(S, I) := \beta SI$ is replaced with nonlinear functions $f(S, I)$ that satisfy:
\begin{itemize}
\item[\textbf{(P1)}] $f\colon\mathbb{R}^2_+ \to \mathbb{R}_+$  is a differentiable function with $f(S, 0) = f(0, I) = 0$ and $f(S, I) > 0$ for all $S, I > 0$.
\item[\textbf{(P2)}] $\dfrac{\partial f(S, I)}{\partial S} > 0$ for all $S \ge0$ and $I > 0$.
\item[\textbf{(P3)}] $\dfrac{\partial f(S, I)}{\partial I} \ge0$ for all $S, I \ge0$.
\item[\textbf{(P4)}] $I\dfrac{\partial f(S, I)}{\partial I} - f(S, I) \le0$ for all $S, I \ge0$.
\item[\textbf{(P5)}] There exists $\eta > 0$ such that $f(S, I) \le\eta I$ for all $S, I \ge0$.
\end{itemize}

It is worth noting that the class of functions $f$ with properties  \textbf{(P1)}--\textbf{(P5)} includes many well-known incidence functions, for example,
\begin{equation*}\label{eq:Nonlinear1}
   f(S, I) = \dfrac{\beta SI}{1 + \kappa I}, \qquad f(S, I) = \dfrac{\beta SI}{1 + \kappa_1 S + \kappa_2 I},
\end{equation*}
correspond to the saturated incidence \cite{Capasso} and the Beddington-DeAngelis response \cite{Beddington, DeAngelis, Zhang}.
Nonlinear functions with properties \textbf{(P1)}--\textbf{(P5)} were used in \cite{ConnellMcCluskey, Sun}, whereas those satisfying properties \textbf{(P1)}--\textbf{(P4)} were considered in \cite{Tian}. 
Also, a generalized hepatitis B epidemic model using a nonlinear incidence rate was proposed and analyzed in \cite{Hoang2023}. 

Inspired by the aforementioned studies that used nonlinear incidence functions, we generalize equation \eqref{eq:1} by replacing the bilinear function  $C H_n$ with a function $f(C, H_n)$ that satisfies \textbf{(P1)}--\textbf{(P4)}. 
More precisely, we propose the following compartment model:
\begin{equation}\label{eq:1G}
\begin{split}
 \dfrac{dC}{dt} &= \Lambda - f(C, H_n) -\alpha C + \gamma H_n -\mu C, \\
 \dfrac{dH_n}{dt} &= f(C, H_n) - \gamma H_n - \theta H_n - \mu H_n,\\
 \dfrac{d H_b}{dt} &= \alpha C + \theta H_n - \mu H_b.
\end{split}
\end{equation}

There are some practical considerations that justify replacing the bilinear incidence term $\beta CH_n$ with the nonlinear incidence $f(C,H_n)$. 
In real-world cyber environments, the influence of black-hat hackers does not always increase linearly with population size due to limited user interaction, increasing cybersecurity awareness, information overload, and network constraints. 
Furthermore, when the number of black-hat hackers is large enough, their overall influence may gradually saturate because the number of susceptible users is finite, and protective mechanisms become more effective. 
Therefore, a nonlinear incidence function provides a more realistic description of the propagation dynamics of malware and cyber activities.
Conversely, introducing nonlinear incidence rates makes the generalized model more flexible, which is useful for parameter estimation problems with real data.

Clearly, \eqref{eq:1G} is a generalization that is also more flexible than \eqref{eq:1}. 
Consequently, it can generate a broader spectrum of realistic scenarios.  It can also be useful for modeling the spread of information on social networks. 
Readers are referred to \cite{Bhattacharya, Daley, Dietz, Goffman, Piqueira4} for compartmental epidemiological systems for modeling information spreading.
From the perspective of information spreading, the model can be interpreted as follows:
\begin{itemize}
\item The compartment $C$ can be interpreted as a group of neutral internet users, or users who do not have a clear opinion about a given piece of information. 
When information reaches individuals in this compartment, they may believe it and participate in disseminating it, thereby becoming members of compartment $H_n$. 
Alternatively, they may recognize the information's falsehood, oppose or verify it, and consequently move to compartment $H_b$.
\item The compartment $H_n$ represents individuals who spread positive or negative information. 
After some time, individuals in this compartment may stop spreading information and return to the neutral state $C$, or be convinced by reliable information and move to compartment $H_b$.
\item The compartment $H_b$ includes individuals who disseminate reliable information and participate in fact-checking and information verification to reduce the spread of misinformation in online communities.
\end{itemize}
Note that the parameter $\mu$ now represents the rate at which individuals leave the system from each compartment, 
while the parameter  $\Lambda$ represents the recruitment rate of new users into the online system.
%

First, we examine the positivity and boundedness of the solutions, compute the basic reproduction number, and determine the hacker-free (HFE) and hacker-present (HPE) equilibrium points. 
Then, we establish their local and \textit{global asymptotic stability} (GAS) properties. 
Note that stability results for cascade-structured systems \cite{Seibert} are applied to examine the GAS problem. 
A recent study \cite{Hoang2026} used this approach to establish the GAS of a generalized epidemiological model for malware propagation.
Next, we analyze the GAS of the HFE using either the connection between the basic reproduction number and the disease-free equilibrium in epidemic models \cite{Castillo-Chavez} 
or a quadratic Lyapunov function.
We examine the GAS of the HPE based on the Poincar\'e-Bendixson theorem combined with the Bendixson-Dulac criterion for two-dimensional dynamical systems \cite{Allen, Martcheva}.
As a result, the global dynamics of the generalized compartment model are fully characterized.

Second, for the numerical simulation of \eqref{eq:1G}, we construct a positivity-preserving \textit{nonstandard finite difference} (NSFD) scheme \cite{Mickens1, Mickens2, Mickens3, Mickens4, Mickens5}. 
The positivity of solutions is a fundamental property of mathematical models that describe real-world systems \cite{Allen, Brauer, Martcheva, Smith},
so it should be preserved by numerical methods \cite{Ascher, Horvath, Stuart}.
The NSFD approach, developed by Mickens \cite{Mickens1, Mickens2, Mickens3, Mickens4, Mickens5},
provides an effective framework for constructing numerical schemes that preserve essential qualitative properties and other important features. 
Surveys on the development of NSFD methods and their applications can be found in \cite{HoangMatthias, Patidar1, Patidar2}. 
Unlike the commonly used approaches in \cite{Cresson, Dimitrov3, Wood0, Wood1} that generally yield first-order NSFD schemes, 
we adopt the approach recently proposed in \cite{HoangMatthias2026} to derive a second-order, positivity-preserving NSFD scheme. 
This scheme does not impose the condition that the right-hand side function vanish at any time step. 
Analysis and numerical simulations show that the derived NSFD scheme significantly improves the accuracy of well-known first-order NSFD schemes resulting from \cite{Cresson, Dimitrov3, Wood0, Wood1}.
Moreover, the scheme preserves the positivity and asymptotic stability properties of the continuous-time model. 
Thus, the second-order NSFD scheme is particularly useful for simulating the dynamics of continuous-time models over long time periods.

Lastly, we conducted several numerical experiments to validate the theoretical findings and demonstrate the advantages of the implemented second-order NSFD scheme. 
The numerical results provide strong evidence in support of the theoretical assertions.
This work is organized as follows.
Section~\ref{Sec2} investigates the dynamical properties of the proposed model \eqref{eq:1G}. 
Section~\ref{Sec3} constructs and analyzes the second-order positivity-preserving NSFD scheme.
Section~\ref{Sec4}reports the results of the numerical experiments. 
The final section presents concluding remarks and discussions.

\section{Dynamical Analysis of the Generalized Model}\label{Sec2}
In this section, we analyze the dynamical properties of the proposed model \eqref{eq:1G}.
\subsection{Basic properties}\label{Subsec2}
First, we analyze some basic properties of the model \eqref{eq:1G}.
\begin{theorem}\label{Theorem1}
The following assertions hold for the compartment model \eqref{eq:1G}:
\begin{itemize}
\item[(i)] It admits the positive orthant of $\mathbb{R}^3$, 
$\mathbb{R}_+^3 = \{(C,\,H_n,\,H_b) \in \mathbb{R}^3|C, H_n, H_b \ge0\}$,
as a positive invariant set. 
\item[(ii)] If $C(0)\ge0, H_n(0) > 0, H_b(0) \ge0$, then $C(t), H_n(t), H_b(t)>0$ for $t>0$.
\item[(iii)] The total population $N(t) = C(t) + H_n(t) + H_b(t)$ ($t \ge0$) monotonically converges to $N^* = \frac{\Lambda}{\mu}$.
\item[(iv)] All the solutions satisfy the following estimates
\begin{equation}\label{eq:9}
\begin{split}
&\limsup_{t \to \infty}\big(C(t) + H_n(t)\big) \le \dfrac{\Lambda}{\min\{\alpha + \mu,\,\theta + \mu\}},\\
&\liminf_{t \to \infty}\big(C(t) + H_n(t)\big) \ge \dfrac{\Lambda}{\max\{\alpha + \mu,\,\theta + \mu\}},\\
&\limsup_{t \to \infty}H_b(t) \le \dfrac{{\max\{\alpha,\,\theta\}}}{\mu}\dfrac{\Lambda}{\min\{\alpha + \mu,\,\theta + \mu\}},\\
&\liminf_{t \to \infty}H_b(t) \ge \dfrac{{\min\{\alpha,\,\theta\}}}{\mu}\dfrac{\Lambda}{\max\{\alpha + \mu,\,\theta + \mu\}}.
\end{split}
\end{equation}
\end{itemize}
\end{theorem}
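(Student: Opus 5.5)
The plan is to handle the four assertions through elementary differential inequalities and comparison arguments. Throughout I assume $f$ is continuously differentiable, which strengthens the differentiability in \textbf{(P1)}, so that the right-hand side of \eqref{eq:1G} is locally Lipschitz and solutions exist and are unique. Global existence will follow from the bounds in (iii).

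For (i) and (ii), the key device is to rewrite the $H_n$-equation in multiplicative form. Since $f(C,0)=0$, for $H_n>0$ we have
\begin{equation*}
f(C,H_n)=H_n\,g(C,H_n), \qquad g(C,H_n):=\int_0^1 \frac{\partial f}{\partial I}(C,sH_n)\,ds .
\end{equation*}
The function $g$ is continuous, and it is nonnegative by \textbf{(P3)}. Hence along a solution
\begin{equation*}
H_n(t)=H_n(0)\exp\Big(\int_0^t \big(g(C(s),H_n(s))-\gamma-\theta-\mu\big)\,ds\Big).
\end{equation*}
This gives $H_n(t)\ge 0$ always, and $H_n(t)>0$ for all $t$ whenever $H_n(0)>0$.

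For $C$, I argue by contradiction. Let $t^*>0$ be the first time with $C(t^*)=0$, assuming such a time exists. Since $f(0,H_n)=0$ by \textbf{(P1)}, we get $C'(t^*)=\Lambda+\gamma H_n(t^*)\ge\Lambda>0$. This is impossible, because $C$ is positive just before $t^*$. The case $C(0)=0$ is handled the same way, since $C'(0)>0$ makes $C$ positive immediately. Thus $C(t)>0$ for $t>0$.

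For $H_b$, the inequality $H_b'\ge -\mu H_b+\alpha C$ gives
\begin{equation*}
H_b(t)\ge H_b(0)e^{-\mu t}+\alpha\int_0^t e^{-\mu(t-s)}C(s)\,ds,
\end{equation*}
and the right-hand side is positive for $t>0$ because $C>0$ there. Together these arguments give both the positive invariance of $\mathbb{R}^3_+$ and the strict positivity in (ii).

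For (iii), adding the three equations cancels all interaction and transfer terms and leaves $N'=\Lambda-\mu N$. Therefore
\begin{equation*}
N(t)=N^*+(N(0)-N^*)e^{-\mu t},
\end{equation*}
which converges monotonically to $N^*$. In particular all solutions are bounded, which yields global existence.

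For (iv), let $m=\min\{\alpha+\mu,\theta+\mu\}$ and $M=\max\{\alpha+\mu,\theta+\mu\}$. Summing the first two equations cancels $f$ and $\gamma H_n$:
\begin{equation*}
(C+H_n)'=\Lambda-(\alpha+\mu)C-(\theta+\mu)H_n.
\end{equation*}
Using nonnegativity from (i), this yields
\begin{equation*}
\Lambda-M(C+H_n)\le (C+H_n)'\le \Lambda-m(C+H_n).
\end{equation*}
The comparison principle for linear scalar ODEs then gives the first two estimates in \eqref{eq:9}.

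Next I bound the $H_b$-equation from both sides:
\begin{equation*}
\min\{\alpha,\theta\}(C+H_n)-\mu H_b\le H_b'\le \max\{\alpha,\theta\}(C+H_n)-\mu H_b .
\end{equation*}
Fix $\varepsilon>0$. For large $t$, the limsup and liminf bounds on $C+H_n$ hold up to $\varepsilon$, and the comparison lemma applies with these constant forcing terms. Letting $\varepsilon\to 0$ gives the last two estimates.

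The only delicate step is the positivity of $H_n$, since the face $H_n=0$ is invariant and there is no sign information directly at the boundary. The factorization $f=H_n g$ with $g$ continuous resolves this, and it is the reason I require $f\in C^1$. If only differentiability of $f$ were available, I would instead use uniqueness of solutions on the invariant face $\{H_n=0\}$ to rule out crossing.
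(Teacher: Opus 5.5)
Your proof is correct. Parts (iii)--(iv) and your arguments for $H_n$ and $C$ in (ii) are essentially the paper's; its $\widetilde f$ (equal to $f/H_n$ for $H_n>0$ and $\partial f(C,0)/\partial H_n$ at $H_n=0$) is exactly your $g$.

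The genuine difference is in (i). The paper obtains invariance of $\mathbb{R}^3_+$ from the boundary sign conditions via the quasi-positivity result of Smith--Waltman (Prop.~B.7). It then uses (i) inside (ii): its contradiction for $C$ is that $C<0$ just before $t^*$. You instead get (i) and (ii) together from three ingredients: the exponential representation of $H_n$, minimality of the first zero of $C$, and a variation-of-constants lower bound for $H_b$. The paper's route is shorter. Yours is self-contained and makes the required regularity explicit.

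In two places your version is tighter than the paper's.
\begin{itemize}
\item The paper asserts continuity of $\widetilde f$ as a consequence of \textbf{(P1)} alone, which bare differentiability of $f$ does not guarantee. Your $C^1$ hypothesis, with $g(C,H_n)=\int_0^1 \partial_I f(C,sH_n)\,ds$, settles it. Treating $H_n'=a(t)H_n$ as a linear ODE also avoids the paper's division by $H_n$.
\item In (iv) the paper substitutes the limsup/liminf bounds for $C+H_n$ directly into the $H_b$ inequality ``for $t$ large enough''. Your $\varepsilon$-argument is what that step actually requires.
\end{itemize}

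The one weak spot is your closing fallback remark. Under mere differentiability you do not have uniqueness of solutions, so the invariant-face argument would not go through either. The paper's appeal to a quasi-positivity invariance theorem tacitly needs the same uniqueness (e.g.\ local Lipschitz continuity), so this does not affect your main proof.
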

\begin{proof}
\textbf{Proof of Part (i).} It follows from the system \eqref{eq:1G} that
\begin{equation*}
\begin{split}
 \dfrac{dC}{dt}\bigg|_{C = 0} &= \Lambda + \gamma H_n > 0,\\
 \dfrac{dH_n}{dt}\bigg|_{H_n = 0} &= 0,\\
 \dfrac{d H_b}{dt}\bigg|_{H_b = 0} &= \alpha C + \theta H_n \ge0.
\end{split}
\end{equation*}
Thus, we deduce from \cite[Proposition B.7]{Smith} that $C(t), H_n(t), H_b(t) \ge0$ for $t>0$, whenever $C(0), H_n(0), H_b(0) \ge0$. 
This completes this proof.\\
\textbf{Proof of Part (ii).} 
Let $\big(C(0),\,H_n(0),\,H_b(0)\big)$ be any initial data with $C(0) \ge0, H_n(0) > 0$, and $H_b(0) \ge0$.
We define the function
\begin{equation*}\label{eq:7}
\widetilde{f}(C, H_n) =
\begin{cases}
  &\dfrac{f(C, H_n)}{H_n} \quad  \text{if} \quad H_n > 0,\\[10pt]
  &\dfrac{\partial f(C, 0)}{\partial H_n} \quad \text{if} \quad H_n = 0.
\end{cases}
\end{equation*}
Then, $f(C, H_n) = H_n \widetilde{f}(C, H_n)$ for $H_n\ge0$ and it follows from \textbf{(P1)} that $\widetilde{f}$ is continuous. 
From the second equation of \eqref{eq:1G}, we obtain
\begin{equation*}
   \dfrac{dH_n}{H_n} = \widetilde{f}(C, H_n) - (\gamma + \theta + \mu),
\end{equation*}
which implies that
\begin{equation*}
    H_n(t) = H_n(0)\,
    \exp\Bigl\{\int_0^t\bigl[\widetilde{f}(C(\tau), H_n(\tau)) - (\gamma + \theta + \mu)\bigr]d\tau\Bigr\}.
\end{equation*}
Thus, $H_n(t)>0$ whenever $H_n(0)>0$ and $H_n(t)\equiv0$ if and only if $H_n(0)=0$.

To prove $C(t) > 0$ for $t > 0$, we assume to the contrary that $C(t)=0$ for some $t>0$ and set 
\begin{equation*}
     t^* = \min\{t|\, C(t) = 0\}.
\end{equation*}
At $t=t^*$, we have
\begin{equation*}
     \dfrac{dC}{dt}\bigg|_{t = t^*} = \Lambda + \gamma H_n(t^*) > 0.
\end{equation*}
From the continuity of $C(t)$, there exists $\epsilon > 0$ such that
\begin{equation*}
     C'(t) > 0, \quad t \in (t^* - \epsilon,\,t^* + \epsilon) \subset (0,\,t^*+\epsilon).
\end{equation*}
Thus, for $t^* - \epsilon < t < t^*$, we have
\begin{equation*}
    C(t) < C(t^*) = 0.
\end{equation*}
This is a contradiction to the conclusion in Part (i).
Hence, we conclude that $C(t)>0$ for $t>0$. 
By similar arguments, we also obtain $H_b(0)>0$. 
The proof is complete.\\
\textbf{Proof of Part (iii).} It is easy to verify that the total population $N$ satisfies
\begin{equation*}
   \dfrac{dN}{dt} = \Lambda - \mu N.
\end{equation*}
Thus,
\begin{equation}\label{eq:8}
    N(t) = \bigg(N(0) - \dfrac{\Lambda}{\mu}\bigg)\,\mathrm{e}^{-\mu t} + \dfrac{\Lambda}{\mu},
\end{equation}
which implies the monotone convergence of $N(t)$.\\
\textbf{Proof of Part (iv).} Adding side-by-side the first and second equations of \eqref{eq:1G} gives
\begin{equation*}
\begin{split}
   \Lambda - \max\{\alpha + \mu,\,\theta + \mu\}(C + H_n) \le \dfrac{d(C + H_n)}{dt} 
   &= \Lambda - (\alpha + \mu)C - (\theta + \mu)H_n\\
   &\le\Lambda - \min\{\alpha + \mu,\,\theta + \mu\}(C + H_n).
\end{split}
\end{equation*}
Using the basic comparison theorem for ODEs \cite{McNabb} leads to the first and second estimates of \eqref{eq:9}.

Next, it follows from the third equation of \eqref{eq:1G} that 
\begin{equation*}
\min\{\alpha,\,\theta\}(C + H_n) - \mu H_b \le\dfrac{d H_b}{dt}
= \alpha C + \theta H_n - \mu H_b 
\le\max\{\alpha,\,\theta\}(C + H_n) - \mu H_b.
\end{equation*}
From the first and second estimates of \eqref{eq:9}, we have for $t$ large enough
\begin{equation*}
   \min\{\alpha,\,\theta\}\dfrac{\Lambda}{\max\{\alpha + \mu,\,\theta + \mu\}} - \mu H_b 
   \le \dfrac{dH_b}{dt} 
   \le  \max\{\alpha,\,\theta\}\dfrac{\Lambda}{\min\{\alpha + \mu,\,\theta + \mu\}} - \mu H_b.
\end{equation*}
Then, the two last estimates of \eqref{eq:9} are derived from the basic comparison theorem for ODEs \cite{McNabb}. The proof is complete.
\end{proof}

We now determine the equilibrium points of \eqref{eq:1G}. 
\begin{lemma}\label{Lemma1}
There are two types of equilibrium points of \eqref{eq:1G}:
\begin{itemize} 
\item[(iii)] A hacker-present equilibrium (HPE) point $E^* = \big(C^*,\,H_n^*,\,H_b^*\big)$ exists if and only if 
\begin{equation}\label{eq:14}
     \dfrac{\partial f\bigg(\dfrac{\Lambda}{\alpha + \mu},\,0\bigg)}{\partial H_n} - (\gamma + \theta + \mu) > 0.
\end{equation}
When this is the case, $E^*$ is defined by
\begin{equation}\label{eq:11}
\begin{split}
  H_b^* &= \dfrac{\alpha C^* + \theta H_n^*}{\mu},\\
  C^* &= \dfrac{\Lambda - (\theta + \mu)H_n^*}{\alpha + \mu}, 
\end{split}
\end{equation}
where $H_n^*$ is a unique positive solution, belonging to $\big(0,\,\frac{\Lambda}{\theta + \mu}\big)$, of the equation
\begin{equation}\label{eq:12}
    W(H_n) := \dfrac{1}{H_n}f\bigg(\dfrac{\Lambda - (\theta + \mu)H_n}{\alpha + \mu}, H_n\bigg) - (\gamma + \theta + \mu) = 0.
\end{equation}
\item[(ii)] A hacker-free equilibrium (HFE) point $E^0$ always exists, where
\begin{equation}\label{eq:15}
E^0 = \big(C^0,\,H_n^0,\,H_b^0\big) = \bigg(\dfrac{\Lambda}{\alpha + \mu},\,0,\,\dfrac{\alpha\Lambda}{\mu(\alpha + \mu)}\bigg).
\end{equation}
\end{itemize}
\end{lemma}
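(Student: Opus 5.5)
Setting the right-hand sides of \eqref{eq:1G} to zero, we first dispose of the case $H_n = 0$. By \textbf{(P1)}, $f(C,0)=0$, so the first equation reduces to $\Lambda - (\alpha+\mu)C = 0$ and the third to $\alpha C - \mu H_b = 0$. This yields exactly $E^0$ in \eqref{eq:15}, which always exists, proving part (ii).

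For an equilibrium with $H_n > 0$, we add the first two equilibrium equations. This gives $\Lambda - (\alpha+\mu)C - (\theta+\mu)H_n = 0$, so $C = \frac{\Lambda - (\theta+\mu)H_n}{\alpha+\mu}$, and the third equation gives $H_b = (\alpha C + \theta H_n)/\mu$; these are the formulas \eqref{eq:11}. Requiring $C>0$ forces $H_n \in \bigl(0, \frac{\Lambda}{\theta+\mu}\bigr)$. (We note $C=0$ is impossible, since then $f=0$ and the second equation would give $H_n=0$.) Dividing the second equilibrium equation by $H_n$ turns it into $W(H_n)=0$ with $W$ as in \eqref{eq:12}. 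So an HPE corresponds exactly to a root of $W$ in this interval, and it remains to study $W$ there.

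We then establish three properties of $W$.
\begin{itemize}
\item[(a)] \emph{Behaviour at $0^+$.} As $H_n \to 0^+$, $f(C(H_n),H_n)/H_n \to \partial f(\Lambda/(\alpha+\mu),0)/\partial H_n$. This uses differentiability of $f$ together with $f(C,0)=0$, i.e.\ the continuity of $\widetilde f$ from the proof of Theorem~\ref{Theorem1}(ii). Hence $W(0^+)$ equals the left side of \eqref{eq:14}.
\item[(b)] \emph{Behaviour at the right endpoint.} At $H_n = \Lambda/(\theta+\mu)$ we have $C=0$, so $f=0$ and $W = -(\gamma+\theta+\mu) < 0$.
\item[(c)] \emph{Monotonicity.} $W$ is strictly decreasing on the interval. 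Writing $g(H_n) = f(C(H_n),H_n)/H_n$, we compute
\[
g'(H_n) = \frac{1}{H_n}\Bigl(-\frac{\theta+\mu}{\alpha+\mu}\Bigr)\frac{\partial f}{\partial C} + \frac{H_n \,\partial f/\partial H_n - f}{H_n^2}.
\]
The first term is strictly negative by \textbf{(P2)}, since $H_n>0$, and the second is $\le 0$ by \textbf{(P4)}.
\end{itemize}

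Combining these, if \eqref{eq:14} holds, the intermediate value theorem together with strict monotonicity gives a unique root $H_n^*$ in the interval. Conversely, if \eqref{eq:14} fails then $W(0^+)\le 0$, and strict decrease gives $W<0$ on the whole open interval, so there is no HPE. This settles the ``if and only if''. We then verify that $C^*>0$ and $H_b^*>0$ from the formulas.

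The main obstacle is the care needed at the endpoints: step (a) requires the limit to be handled through the continuous extension $\widetilde f$, and step (b) requires $f$ to be defined, and equal to zero, at $C=0$. Both are guaranteed by \textbf{(P1)}. The monotonicity argument in (c) is where \textbf{(P2)} and \textbf{(P4)} are essential.
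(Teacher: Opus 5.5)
Your proposal is correct and follows the same route as the paper: add the first two equilibrium equations to express $C$ in terms of $H_n$, reduce the second equation to $H_nW(H_n)=0$, and combine the endpoint behaviour of $W$ with $W'<0$, which comes from \textbf{(P2)} and \textbf{(P4)}. You also make explicit two points the paper leaves implicit: the ``only if'' direction (if \eqref{eq:14} fails, then $W<0$ on the whole interval), and why $H_n^*<\Lambda/(\theta+\mu)$ (because $C=0$ would force $H_n=0$).
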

\begin{proof}
Any equilibrium point is a solution to the system
\begin{equation}\label{eq:10}
\begin{split}
0 &= \Lambda - f(C, H_n) -\alpha C + \gamma H_n -\mu C, \\
0 &= f(C, H_n) - \gamma H_n - \theta H_n - \mu H_n,\\
0 &= \alpha C + \theta H_n - \mu H_b.
\end{split}
\end{equation}
From the third equation of \eqref{eq:10}, we obtain the first formula of \eqref{eq:11}. 
On the other hand, adding side-by-side the first and second equations of \eqref{eq:10} gives
\begin{equation*}
    \Lambda - (\alpha + \mu)C - (\theta + \mu)H_n = 0,
\end{equation*}
which leads to the second formula of \eqref{eq:11}.

By substituting the formula of $C^*$ in the second formula of \eqref{eq:11} into the the second equation of \eqref{eq:10}, we obtain an equation for $H_n$
\begin{equation*}
    f\bigg(\dfrac{\Lambda - (\theta + \mu)H_n}{\alpha + \mu}, H_n\bigg) - \gamma H_n - \theta H_n - \mu H_n,
\end{equation*}
which is equivalent to $H_nW(H_n) = 0$, where $W$ is defined as in \eqref{eq:12}

Thus, any HPE point must satisfy \eqref{eq:12} and then, $H_n^* \in \bigl(0,\,\frac{\Lambda}{\theta + \mu}\bigr)$. 
Therefore, we consider the function $W$ on $\big(0,\,\frac{\Lambda}{\theta + \mu}\big)$.
On this interval, we have
\begin{equation}\label{eq:13}
\begin{split}
W\bigg(\dfrac{\Lambda}{\theta + \mu}\bigg) &= f\bigg(0,\, \dfrac{\Lambda}{\theta + \mu}\bigg) - (\gamma + \theta + \mu) = -(\gamma + \theta + \mu) < 0,\\
\lim_{H_n \to 0}\bigl(W(H_n)\bigr) &= \dfrac{\partial f\bigg(\dfrac{\Lambda}{\alpha + \mu},\,0\bigg)}{\partial H_n} - (\gamma + \theta + \mu) > 0.
\end{split}
\end{equation}
Note that the first estimate of \eqref{eq:13} is derived from the property $\textbf{(P1)}$, whereas the second one is derived from \eqref{eq:14}.

Furthermore, the first derivative of $W(H_n)$ with $H_n>0$ is given by
\begin{equation*}
\begin{split}
   W'(H_n) &= \dfrac{1}{H_n^2}\bigg[H_n\bigg(-\dfrac{\theta + \mu}{\alpha + \mu}\bigg)\dfrac{\partial f}{\partial C}\bigg(\dfrac{\Lambda - (\theta + \mu)H_n}{\alpha + \mu},\,H_n\bigg)\\
    &\qquad+ H_n\dfrac{\partial f}{\partial H_n}\bigg(\dfrac{\Lambda - (\theta + \mu)H_n}{\alpha + \mu},\,H_n\bigg) - f\bigg(\dfrac{\Lambda - (\theta + \mu)H_n}{\alpha + \mu},\,H_n\bigg)\bigg].
\end{split}
\end{equation*}
From $\textbf{(P2)}$ and $\textbf{(P4)}$, we deduce that $W'(H_n) < 0$ for $H_n \in \big(0,\,\frac{\Lambda}{\theta + \mu}\big)$. 
Combining this with \eqref{eq:13}, we conclude that the equation $W(H_n) = 0$  has a unique solution belonging to $\big(0,\,\frac{\Lambda}{\theta + \mu}\big)$. 
Therefore, $E^*$ is the only HPE point of \eqref{eq:1G}.

From the equation $H_nW(H_n) = 0$, we obtain the trivial solution $H_n^0 = 0$. 
Using \eqref{eq:12} yields $H_b^0$ and $C^0$, which are defined as in  \eqref{eq:15}. 
Thus, $E^0$ is an HFE point, which always exists for all parameter values. 
The proof is complete.
\end{proof}

Before concluding this subsection, we apply the method developed by van den Driessche and Watmough \cite{vdDriessche} to compute the basic reproduction number. 
To do so, we set $X = (H_n,\,H_b,\,C)$ and represent \eqref{eq:1G} in the vector form
\begin{equation*}
    \dfrac{dX}{dt} = \mathcal{F}(X) - \mathcal{V}(x),
\end{equation*}
where
\begin{equation*}
\mathcal{F}(X) =
\begin{bmatrix}
f(C, H_n)\\[2pt]0\\[2pt]0
\end{bmatrix}, \quad
\mathcal{V}(X) =
\begin{bmatrix}
(\gamma + \theta + \mu)H_n\\[2pt]
-\alpha C - \theta H_n + \mu H_b\\[2pt]
-\Lambda + f(C, H_n) + \alpha C - \gamma H_n + \mu C
\end{bmatrix}.
\end{equation*}
At the HFE point $X^0 = (H_n^0,\,H_b^0,\,C^0)$, we have
\begin{equation*}
\begin{split}
D\mathcal{F}(X^0) &= 
\begin{bmatrix}
\dfrac{\partial f(C^0, H_n^0)}{\partial H_n}&0&\dfrac{\partial f(C^0, H_n^0)}{\partial C}\\[4pt]
0&0&0\\[4pt]
0&0&0
\end{bmatrix},\\
D\mathcal{V}(X^0) &=
\begin{bmatrix}
\gamma + \theta + \mu&0&0\\[4pt]
-\theta&\mu&-\alpha\\[4pt]
\dfrac{\partial f(C^0, H_n^0)}{\partial H_n} - \gamma&0&\dfrac{\partial f(C^0, H_n^0)}{\partial C} + \alpha + \mu
\end{bmatrix}.
\end{split}
\end{equation*}
Hence, the basic reproduction number $\mathcal{R}_0$ is computed as
\begin{equation*}
      \mathcal{R}_0 = \dfrac{1}{\gamma + \theta + \mu}\dfrac{\partial f\bigg(\dfrac{\Lambda}{\alpha + \mu},\,0\bigg)}{\partial H_n}.
\end{equation*}
From \eqref{eq:14} in Lemma~\ref{Lemma1}, we see that the HPE point exists if and only if $\mathcal{R}_0>1$. 
This characteristic is commonly observed in compartmental epidemic models \cite{Allen, Brauer, Martcheva}.
As will be seen in the subsequent sections, the asymptotic stability of the equilibrium points also depends on the value of $\mathcal{R}_0$ relative to 1.

\subsection{Local Asymptotic Stability}
In this subsection, the \textit{local asymptotic stability} (LAS) of the equilibrium points is analyzed.
\begin{theorem}[LAS analysis of the HFE point]\label{Lemma2}
The HFE point is locally asymptotically stable if $\mathcal{R}_0 < 1$, and is unstable of $\mathcal{R}_0>1$.
\end{theorem}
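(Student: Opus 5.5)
The plan is to linearize \eqref{eq:1G} at the HFE point $E^0$ of Lemma~\ref{Lemma1} and read off the eigenvalues of the Jacobian. Two consequences of \textbf{(P1)} simplify the computation.
\begin{itemize}
\item Since $f(C,0)=0$ for all $C\ge 0$, we have $\partial f(C^0,0)/\partial C = 0$.
\item The only nontrivial derivative is $a := \partial f(C^0,0)/\partial H_n$, with $C^0=\Lambda/(\alpha+\mu)$. By definition, $a = (\gamma+\theta+\mu)\mathcal{R}_0$.
\end{itemize}

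Ordering the variables as $(C,H_n,H_b)$, the Jacobian at $E^0$ is
\begin{equation*}
J(E^0)=
\begin{bmatrix}
-(\alpha+\mu) & -a+\gamma & 0\\
0 & a-(\gamma+\theta+\mu) & 0\\
\alpha & \theta & -\mu
\end{bmatrix}.
\end{equation*}
Expanding the determinant along the third column, and then along the second row of the remaining $2\times 2$ block (which is upper triangular), gives three eigenvalues:
\begin{equation*}
\lambda_1=-\mu, \qquad \lambda_2=-(\alpha+\mu), \qquad \lambda_3=a-(\gamma+\theta+\mu)=(\gamma+\theta+\mu)(\mathcal{R}_0-1).
\end{equation*}

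If $\mathcal{R}_0<1$, all three eigenvalues are real and negative. Local asymptotic stability then follows from the linearization (Hartman--Grobman) principle, which requires $f$ to be $C^1$ near $E^0$; this holds under the differentiability assumed in \textbf{(P1)}. If $\mathcal{R}_0>1$, then $\lambda_3>0$, so $E^0$ is unstable by the same principle.

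The only delicate point is justifying $\partial f(C^0,0)/\partial C=0$, which removes the $(1,1)$ contribution of $f$. It follows directly from differentiating the identity $f(C,0)\equiv 0$ in $C$. Apart from this, the argument is a routine triangular-block eigenvalue computation.
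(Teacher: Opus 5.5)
Your proposal is correct and follows the paper's proof essentially step for step: you use the same Jacobian at $E^0$, the same use of $f(C,0)\equiv 0$ to get $\partial f(E^0)/\partial C=0$, the same three eigenvalues $-\mu$, $-(\alpha+\mu)$ and $(\gamma+\theta+\mu)(\mathcal{R}_0-1)$, and the same linearization principle to conclude stability or instability. One small nit: \textbf{(P1)} asserts differentiability, not $C^1$, so it does not give the $C^1$ regularity you attribute to it; this is harmless, because the quadratic-Lyapunov proof of the indirect method only needs the $o(\|x-E^0\|)$ remainder at $E^0$, and the paper glosses over the same point.
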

\begin{proof}
The Jacobian matrix of \eqref{eq:1G} evaluated at $E^0$ is given by
\begin{equation*}
   J(E^0) = \begin{bmatrix}
   -\dfrac{\partial f(E^0)}{\partial C} - (\alpha + \mu)&-\dfrac{\partial f(E^0)}{\partial H_n} + \gamma&0\\[4pt]
   0& \dfrac{\partial f(E^0)}{\partial H_n} - (\gamma + \theta + \mu)&0\\[4pt]
    \alpha&\theta&-\mu
    \end{bmatrix}.
\end{equation*}
Note that we have used $\frac{\partial f(E^0)}{\partial C} = 0$, which is a direct consequence of the property \textbf{(P1)}. 
Consequently, $J(E^0)$ has three eigenvalues 
\begin{equation*}
\lambda_1 = -\dfrac{\partial f(E^0)}{\partial C} - (\alpha + \mu), \quad \lambda_2 = \dfrac{\partial f(E^0)}{\partial H_n} - (\gamma + \theta + \mu),\quad \lambda_3 = -\mu.
\end{equation*}
It is clear that $\lambda_1$ and $\lambda_3$ are strictly negative. On the other hand, 
\begin{equation*}
\lambda_2 = (\gamma + \theta + \mu)(\mathcal{R}_0 - 1).
\end{equation*}
Hence, $\mathcal{R}_0 < 1$ implies that $\lambda_2 < 0$. 
Consequently, $J(E^0)$ contains three strictly negative real numbers. 
According to Lyapunov's indirect theorem (linearized method) \cite{Khalil, Stuart}, $E^0$ is locally asymptotically stable.

If $\mathcal{R}_0 > 1$, then $\lambda_2 > 0$. This implies that $E^0$ is unstable. 
The proof is complete.
\end{proof}

\begin{theorem}[LAS analysis of the HPE point]\label{Theorem2}
The HPE point $E^*$ of \eqref{eq:1G} is locally asymptotically stable whenever it exists. 
\end{theorem}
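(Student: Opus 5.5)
The plan is to linearize \eqref{eq:1G} at $E^*$ and use its block-triangular structure. The $H_b$ equation does not feed back into the $C$ and $H_n$ equations, so the Jacobian $J(E^*)$ has the eigenvalue $-\mu$ from the third row and column. The remaining two eigenvalues come from the $2\times 2$ upper-left block
\begin{equation*}
J_2 = \begin{bmatrix}
-f_C - (\alpha+\mu) & -f_{H} + \gamma\\[2pt]
f_C & f_H - (\gamma+\theta+\mu)
\end{bmatrix},
\end{equation*}
where $f_C$ and $f_H$ denote the partial derivatives of $f$ evaluated at $(C^*,H_n^*)$. By the Routh--Hurwitz criterion for $2\times2$ matrices, it suffices to show $\trace J_2<0$ and $\det J_2>0$. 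Lyapunov's indirect theorem \cite{Khalil, Stuart} then gives local asymptotic stability, as in the proof of Theorem~\ref{Lemma2}.

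The key input is the equilibrium relation $f(C^*,H_n^*) = (\gamma+\theta+\mu)H_n^*$ from \eqref{eq:10}. Combining it with \textbf{(P4)} gives
\begin{equation*}
f_H \le \dfrac{f(C^*,H_n^*)}{H_n^*} = \gamma+\theta+\mu,
\end{equation*}
so $f_H-(\gamma+\theta+\mu)\le 0$. Since \textbf{(P2)} gives $f_C>0$ (here $H_n^*>0$), the trace is
\begin{equation*}
-f_C-(\alpha+\mu)+f_H-(\gamma+\theta+\mu) < 0 .
\end{equation*}

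For the determinant, write $a := \gamma+\theta+\mu - f_H \ge 0$. Expanding,
\begin{equation*}
\det J_2 = \bigl(f_C+\alpha+\mu\bigr)\,a - f_C\,(\gamma - f_H).
\end{equation*}
Substituting $\gamma - f_H = a - (\theta+\mu)$ turns this into
\begin{equation*}
\det J_2 = (\alpha+\mu)\,a + f_C(\theta+\mu) > 0,
\end{equation*}
because $a\ge0$ and $f_C>0$. Both Routh--Hurwitz conditions therefore hold, all three eigenvalues have negative real parts, and $E^*$ is locally asymptotically stable whenever it exists, i.e., whenever $\mathcal{R}_0>1$.

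The only delicate point is the sign of $f_H-(\gamma+\theta+\mu)$: $f_H$ could in principle be large, which would break both conditions. This is exactly where \textbf{(P4)} is needed together with the equilibrium identity. The cancellation in the determinant also needs care: the $\gamma$ terms must combine so that no sign condition on $\gamma - f_H$ is required. With the substitution above, the argument uses only \textbf{(P2)} and \textbf{(P4)}.
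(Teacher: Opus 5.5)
Your proposal is correct and follows the paper's proof essentially step for step. The paper also uses the block-triangular Jacobian with eigenvalue $-\mu$ and applies Routh--Hurwitz to the $2\times 2$ block. It combines the equilibrium identity $f(C^*,H_n^*)=(\gamma+\theta+\mu)H_n^*$ with \textbf{(P4)} to get a negative trace, and it arrives at the same formula $\det = (\theta+\mu)f_C + (\alpha+\mu)(\gamma+\theta+\mu - f_H) > 0$, whose positivity follows from \textbf{(P2)}.
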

\begin{proof}
The Jacobian matrix of \eqref{eq:1G} evaluated at $E^0$ reads
\begin{equation*}
   J(E^*) = \begin{bmatrix}
   -\dfrac{\partial f(E^*)}{\partial C} - (\alpha + \mu)&-\dfrac{\partial f(E^*)}{\partial H_n} + \gamma&0\\[4pt]
   \dfrac{\partial f(E^*)}{\partial C}& \dfrac{\partial f(E^*)}{\partial H_n} - (\gamma + \theta + \mu)&0\\[4pt]
   \alpha&\theta&-\mu
    \end{bmatrix}.
\end{equation*}
Therefore, $J(E^*)$ always has a strictly negative eigenvalue $\lambda_3 = -\mu < 0$, and the two remaining eigenvalues are exactly those associated with the submatrix
\begin{equation*}
J^* =\begin{bmatrix}
-\dfrac{\partial f(E^*)}{\partial C} - (\alpha + \mu)&-\dfrac{\partial f(E^*)}{\partial H_n} + \gamma\\[4pt]
\dfrac{\partial f(E^*)}{\partial C}& \dfrac{\partial f(E^*)}{\partial H_n} - (\gamma + \theta + \mu)\\
\end{bmatrix}.
\end{equation*}

The characteristic polynomial of $J^*$ is
\begin{equation*}
    P_{J^*}(\lambda) = \lambda^2 - \trace(J^*)\lambda + \det(J^*).
\end{equation*}
We will show that
\begin{equation}\label{eq:16}
   \det(J^*) > 0, \quad \trace(J^*) < 0.
\end{equation}
Indeed, since $E^*$ is the positive equilibrium point, 
it satisfies \eqref{eq:10}. Consequently,
\begin{equation*}
   \dfrac{f(C^*, H_n^*)}{H_n^*} = \gamma + \theta + \mu.
\end{equation*}
Using the property \textbf{(P4)} gives
\begin{equation}\label{eq:17}
   \dfrac{\partial f(C^*,\,H_n^*)}{\partial H_n} \le\dfrac{f(C^*,\,H_n^*)}{H_n^*} = \gamma + \theta + \mu,
\end{equation}
which immediately follows that $\trace(J^*) < 0$.

On the other hand, the determinant of $J^*$ is given by
\begin{equation*}
\begin{split}
   \det(J^*) &=  \bigg[-\dfrac{\partial f(E^*)}{\partial C} - (\alpha + \mu)\bigg]\bigg[\dfrac{\partial f(E^*)}{\partial H_n} - (\gamma + \theta + \mu)\bigg] - \bigg[-\dfrac{\partial f(E^*)}{\partial H_n} + \gamma\bigg]\dfrac{\partial f(E^*)}{\partial C}\\
   &= (\mu + \theta)\dfrac{\partial f(E^*)}{\partial C} + (\alpha + \mu)\bigg(\gamma + \theta + \mu - \dfrac{\partial f(E^*)}{\partial H_n}\bigg).
\end{split}
\end{equation*}
Thus, it follows from \textbf{(P2)}  and \eqref{eq:17} that $\det(J^*) > 0$.

Therefore, \eqref{eq:16} has been shown. Using the Routh-Hurwitz criterion (see \cite[Theorem 4.4]{Allen}), we conclude that all the eigenvalues of $J^*$ have a negative real part. 
Consequently, using the linearized method \cite{Khalil, Stuart}, $E^*$ is locally asymptotically stable.
The proof is complete.
\end{proof}

\subsection{Global Asymptotic Stability Analysis of the HFE Point}\label{Subsec3.3}
In this subsection, we establish the GAS of the HFE point.
To do so, we first consider a submodel, which is governed by the first two equations of \eqref{eq:1G} and is given by
\begin{equation}\label{eq:2G}
\begin{split}
 \dfrac{dC}{dt} &= \Lambda - f(C, H_n) -\alpha C + \gamma H_n -\mu C := F_1(C,\,H_n), \\
 \dfrac{dH_n}{dt} &= f(C, H_n) - \gamma H_n - \theta H_n - \mu H_n := F_2(C,\,H_n).
\end{split}
\end{equation}
The HFE and HPE points are then transformed to $\widehat{E}^0 = (C^0,\,\,H_n^0)$ and $\widehat{E}^* = (C^*,\,\,H_n^*)$, respectively. 
Note that Lemma~\ref{Lemma1} implies that all the solutions to \eqref{eq:1G} and \eqref{eq:2G} are bounded.
The following theorem is proved based on the approach developed in \cite{Castillo-Chavez}.
\begin{theorem}[GAS analysis of the HFE point]\label{Theorem3new}
Assume that $\mathcal{R}_0 < 1$. 
Then, the HFE point $E^0$ of \eqref{eq:1G} is not only locally asymptotically stable, but also globally asymptotically stable, whenever
\begin{equation}\label{eq:17new}
\alpha \le\theta.
\end{equation}
\end{theorem}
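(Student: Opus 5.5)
The plan is to use the Castillo-Chavez framework \cite{Castillo-Chavez}, with the uninfected components $X=(C,H_b)$ and the infected component $Z=H_n$. I would then close the gap caused by the fact that the key sign condition only holds on an absorbing region.

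\textbf{Condition (H1).} With $H_n=0$, the system \eqref{eq:1G} reduces to the linear system
\[
C'=\Lambda-(\alpha+\mu)C,\qquad H_b'=\alpha C-\mu H_b .
\]
This system has $(C^0,H_b^0)$ as a globally asymptotically stable equilibrium, so (H1) is immediate.

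\textbf{Condition (H2).} Write
\[
H_n'=A H_n-\widehat G(C,H_n),\qquad A=\frac{\partial f(C^0,0)}{\partial H_n}-(\gamma+\theta+\mu)<0,\qquad
\widehat G=H_n\Big[\frac{\partial f(C^0,0)}{\partial H_n}-\frac{f(C,H_n)}{H_n}\Big],
\]
where $A<0$ because $\mathcal R_0<1$. Proving $\widehat G\ge 0$ needs two monotonicity facts about $f$.
\begin{itemize}
\item By \textbf{(P4)}, $H\mapsto f(C,H)/H$ is nonincreasing. Hence $f(C,H)/H\le \partial f(C,0)/\partial H_n$ for every $H>0$.
\item By \textbf{(P2)}, $C\mapsto f(C,H)/H$ is increasing for each $H>0$. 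Letting $H\to0^+$ shows that $C\mapsto\partial f(C,0)/\partial H_n$ is nondecreasing.
\end{itemize}
Together these give $\widehat G\ge0$ whenever $C\le C^0=\Lambda/(\alpha+\mu)$.

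\textbf{Role of $\alpha\le\theta$.} The hypothesis \eqref{eq:17new} enters exactly here. When $\alpha\le\theta$, the computation in Part (iv) of Theorem~\ref{Theorem1} gives
\[
\frac{d(C+H_n)}{dt}=\Lambda-(\alpha+\mu)C-(\theta+\mu)H_n\le \Lambda-(\alpha+\mu)(C+H_n).
\]
Consequently, the set $\Omega=\{C,H_n,H_b\ge0,\ C+H_n\le \Lambda/(\alpha+\mu)\}$ is positively invariant. Every solution also satisfies $\limsup_{t\to\infty}(C+H_n)\le \Lambda/(\alpha+\mu)$. On $\Omega$ we have $C\le C^0$, so (H2) holds there, and the Castillo-Chavez theorem gives GAS of $E^0$ relative to $\Omega$.

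\textbf{Main obstacle: solutions starting outside $\Omega$.} Such solutions need not enter $\Omega$ in finite time; they only approach it. I would avoid the invariant-set technicality by a direct estimate that uses the slack in $\mathcal R_0<1$.
\begin{itemize}
\item By continuity and the monotonicity above, choose $\varepsilon>0$ such that
\[
\frac{\partial f(C^0+\varepsilon,0)}{\partial H_n}\le(\gamma+\theta+\mu)-\delta\quad\text{for some }\delta>0 .
\]
\item For $t\ge T$ we have $C(t)\le C^0+\varepsilon$. The two monotonicity facts then give $f(C,H_n)\le(\gamma+\theta+\mu-\delta)H_n$.
\item Hence $H_n'\le-\delta H_n$, so $H_n(t)\to0$ exponentially.
\item The $C$-equation is then asymptotically autonomous, with limit $C'=\Lambda-(\alpha+\mu)C$. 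Using boundedness from Theorem~\ref{Theorem1}, comparison arguments with $f(C,H_n)\le\eta' H_n\to0$ yield $C(t)\to C^0$.
\item The linear $H_b$-equation then gives $H_b(t)\to H_b^0$.
\end{itemize}
This global attractivity, combined with the local asymptotic stability from Theorem~\ref{Lemma2}, yields GAS of $E^0$. Making the $\varepsilon$–$\delta$ continuity step rigorous is where I expect the care to be needed: it uses the monotonicity of $\partial f(\cdot,0)/\partial H_n$ derived from \textbf{(P2)}.
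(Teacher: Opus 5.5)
Your proof is correct, and in its final form it takes a different and more complete route than the paper's.

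\textbf{The paper's route.} It works only with the $(C,H_n)$-subsystem and invokes the Castillo-Chavez theorem on the feasible set $\Omega=\{\Lambda/(\theta+\mu)\le C+H_n\le \Lambda/(\alpha+\mu)\}$. It checks $\widehat G\ge 0$ via $f(C,H_n)\le f(C^0,H_n)$ from \textbf{(P2)}, followed by \textbf{(P4)} at $C=C^0$; this is just a reordering of your two monotonicity facts. It then recovers $H_b$ from the Seibert--Suarez result on cascade systems. The restriction to $\Omega$ is justified only by the $\limsup$/$\liminf$ bounds of Theorem~\ref{Theorem1}, so trajectories that start outside $\Omega$ and merely approach it are never actually handled.

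\textbf{Your route.} Putting $H_b$ into the uninfected block removes the need for the cascade theorem. More importantly, your closing estimate is a self-contained proof of global attractivity on all of $\mathbb{R}^3_+$: $\alpha\le\theta$ gives $\limsup C\le C^0$, the slack in $\mathcal R_0<1$ gives $H_n'\le-\delta H_n$ for $t\ge T$, and comparison drives $C$ and $H_b$ to $C^0$ and $H_b^0$. It also gives an explicit exponential rate for $H_n$, and it makes your (H1)/(H2) verification logically redundant. The paper's route is shorter and modular, and the Castillo-Chavez framework carries over unchanged to several infected compartments with an M-matrix $A$.

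\textbf{The one extra ingredient.} It sits exactly where you placed the care, but it is continuity, not monotonicity, that matters. Monotonicity of $g(C)=\partial f(C,0)/\partial H_n$ alone does not give $g(C^0+\varepsilon)<\gamma+\theta+\mu$; you need $g$ to be right-continuous at $C^0$. This holds when $f$ is $C^1$, which the paper tacitly assumes anyway (linearization, continuity of $\widetilde f$).

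If you want to avoid continuity altogether, use an $\omega$-limit argument:
\begin{itemize}
\item Every $\omega$-limit set is compact, invariant, and contained in $\{C+H_n\le C^0\}$, where $H_n'\le AH_n$ with $A<0$.
\item Along full orbits inside it, $H_n$ would blow up backward in time unless $H_n\equiv 0$.
\item The same backward-blow-up argument applied to the linear $(C,H_b)$-equations then forces the $\omega$-limit set to be $\{E^0\}$.
\end{itemize}
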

\begin{proof}
The proof of this theorem has two steps.\\
\textbf{Step 1.} $\widehat{E}^0 = (C^0,\,\,H_n^0)$ is a globally asymptotically stable equilibrium point of \eqref{eq:2G}.\\
Since \eqref{eq:17new} holds, we deduce from Theorem \ref{Theorem1} that
\begin{equation*}
\begin{split}
  \limsup_{t \to \infty}\big(C(t) + H_n(t)\big) &\le \dfrac{\Lambda}{\alpha + \mu} = C^0,\\
  \liminf_{t \to \infty}\big(C(t) + H_n(t)\big) &\ge \dfrac{\Lambda}{\theta + \mu}.
\end{split}
\end{equation*}
Thus, we can consider \eqref{eq:2G} in a feasible set defined by
\begin{equation}\label{eq:Pset}
\Omega = \bigg\{(C,\,H_n) \in \mathbb{R}_+^2\bigg|\dfrac{\Lambda}{\theta + \mu} \le C + H_n \le \dfrac{\Lambda}{\alpha + \mu}\bigg\}.
\end{equation}
Following the approach in \cite{Castillo-Chavez}, we set $x = C$ and $\textbf{I} = H_n$ and represent \eqref{eq:2G} in the form
\begin{equation}\label{eq:2G1}
\begin{split}
& \dfrac{dx}{dt} = \Lambda - f(x, \textbf{I}) -\alpha x + \gamma \textbf{I} -\mu x := F(x,\,\textbf{I}), \\
& \dfrac{d\textbf{I}}{dt} = f(x, \textbf{I}) - \gamma \textbf{I} - \theta \textbf{I} - \mu \textbf{I} := G(x,\,\textbf{I}).
\end{split}
\end{equation}
Then, the HFE point transforms to $U_0 = (x^*,\,0) = \big(C^0,\,0\big)$.
We will verify that the conditions (H1) and (H2) in \cite{Castillo-Chavez} hold for the system \eqref{eq:2G1}.
 Indeed, consider the equation
\begin{equation*}
\dfrac{dx}{dt} = F(x, 0) = \Lambda - \alpha x - \mu x.
\end{equation*}
The exact solution to this system is given by
\begin{equation*}
x(t) = \bigg(x(0) - \dfrac{\Lambda}{\mu + \alpha}\bigg)\mathrm{e}^{-\mu t} + \dfrac{\Lambda}{\alpha + \mu}.
\end{equation*}
This implies that $x^*$ is globally asymptotically stable. Thus, the condition (H1) in \cite{Castillo-Chavez} holds.

On the other hand, if $\mathcal{R}_0 < 1$ then
\begin{equation*}
A := D_{\textbf{I}}G(x^*,\,0) = \dfrac{\partial f(x^*,\,0)}{\partial \textbf{I}} - (\gamma + \theta + \mu) < 0.
\end{equation*}
Then, the second equation of \eqref{eq:2G} can be rewritten as follows:
\begin{equation*}
\dfrac{d\textbf{I}}{dt} = G(x, \textbf{I}) = A\textbf{I} - \widehat{G}(x, \textbf{I}),
\end{equation*}
where 
\begin{equation*}
\widehat{G}(x, \textbf{I}) = \textbf{I}\dfrac{\partial f(x^*,\,0)}{\partial \textbf{I}} - f(x,\textbf{I}).
\end{equation*}
We will show that $\widehat{G}(x, \textbf{I}) \ge 0$ for all $(x,\,\textbf{I}) \in \Omega$. Indeed, it is clear that $\widehat{G}(x, \textbf{I}) = 0$ if $\textbf{I} = 0$.\\
Consider the case $\textbf{I} \ne 0$. It follows from \textbf{(P2)} that 
\begin{equation*}
\widehat{G}(x, \textbf{I}) \ge  \textbf{I}\dfrac{\partial f(x^*,\,0)}{\partial \textbf{I}} - f(C^0,\textbf{I}) =  \textbf{I}\bigg[\dfrac{\partial f(x^*,\,0)}{\partial \textbf{I}} - \dfrac{f(C^0,\textbf{I})}{\textbf{I}}\bigg].
\end{equation*}
Furthermore, we deduce from \textbf{(P4)} that the function  $\frac{f(C^0,\textbf{I})}{\textbf{I}}$ is decreasing for $\textbf{I} > 0$. Consequently,
\begin{equation*}
\dfrac{f(C^0,\textbf{I})}{\textbf{I}} \le \lim_{\textbf{I} \to 0}\dfrac{f(C^0,\textbf{I})}{\textbf{I}} = \dfrac{\partial f(C^0,\,0)}{\partial \textbf{I}}.
\end{equation*}
Thus, we conclude that
\begin{equation*}
\widehat{G}(x, \textbf{I}) \ge \textbf{I}\bigg[\dfrac{\partial f(C^0,\,0)}{\partial \textbf{I}} - \dfrac{f(C^0,\,0)}{\textbf{I}}\bigg]  \ge 0.
\end{equation*}
Therefore, the conditions (H1) and (H2) in \cite{Castillo-Chavez} hold. 
Using the results developed in \cite{Castillo-Chavez}, the GAS of $(x^*,\,0)$ is obtained. \\
\textbf{Step 2:} $E^0$ is a globally asymptotically stable equilibrium point of \eqref{eq:1G}.\\
Since \eqref{eq:1G} has the structure of a cascade system and $\widehat{E}^0$ is a globally asymptotically stable equilibrium point of \eqref{eq:2G}, we only need to consider the following equation
\begin{equation}\label{eq:21}
\dfrac{d H_b}{dt} = \alpha \dfrac{\Lambda}{\alpha + \mu} - \mu H_b,
\end{equation}
which is obtained by substituting $C = C^0$ and $H = H^0$ into the last equation of \eqref{eq:1G}. 
Equation \eqref{eq:21} has a unique equilibrium point $H_b^{E_0} = \dfrac{\Lambda}{\mu(\alpha + \mu)}$. 
The exact solution of \eqref{eq:21} is given by
\begin{equation*}
H_b(t) = \bigg(H_b(0) - H_b^{E_0}\bigg)\mathrm{e}^{-\mu t} + H_b^{E_0}.
\end{equation*}
We can therefore conclude that $H_b^{E_0}$ is a globally asymptotically stable equilibrium point of \eqref{eq:21}.

Thus, sufficient conditions for the global stabilizability of two cascade-connected nonlinear systems \cite{Seibert} imply that: $E^0$ of \eqref{eq:1G} is globally asymptotically stable. The proof is complete.
\end{proof}

The following Theorem~\ref{Theorem3} establishes the GAS of the HFE point using LaSalle's theorem \cite[Theorem 4.4]{Khalil} with the help of a quadratic Lyapunov function.
\begin{theorem}[GAS Analysis Based on LaSalle's Theorem ]\label{Theorem3}
If $\mathcal{R}_0 < 1$, then the HFE point of \eqref{eq:1G} is globally asymptotically stable under the condition \eqref{eq:17new}.
\end{theorem}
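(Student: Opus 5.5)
We again exploit the cascade structure of \eqref{eq:1G} and reduce the problem to the planar submodel \eqref{eq:2G}. We then treat that submodel with a quadratic Lyapunov function together with LaSalle's theorem \cite[Theorem 4.4]{Khalil}.

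First, we restrict attention to the compact, positively invariant set $\Omega$ in \eqref{eq:Pset}. Under \eqref{eq:17new}, Theorem~\ref{Theorem1}(iv) guarantees that every solution of \eqref{eq:2G} is eventually attracted to $\Omega$ (or to an arbitrarily small neighborhood of it). On $\Omega$ we have $C + H_n \le C^0$, so $C \le C^0 - H_n \le C^0$.

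Next, we choose the candidate Lyapunov function
\begin{equation*}
V(C,H_n) = \tfrac{1}{2}\big(C - C^0 + H_n\big)^2 + k H_n,
\end{equation*}
with a constant $k>0$ to be fixed. Write $u = C - C^0 + H_n$. Adding the two equations of \eqref{eq:2G} and using $\Lambda = (\alpha+\mu)C^0$ gives
\begin{equation*}
\dot u = -(\alpha+\mu)u + (\alpha-\theta)H_n.
\end{equation*}
Hence
\begin{equation*}
\dot V = -(\alpha+\mu)u^2 + (\alpha-\theta)uH_n + k\big(f(C,H_n) - (\gamma+\theta+\mu)H_n\big).
\end{equation*}
For the last term we use \textbf{(P2)} and $C \le C^0$, then the monotonicity of $f(C^0,H_n)/H_n$ from \textbf{(P4)}, exactly as in Step 1 of Theorem~\ref{Theorem3new}. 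This yields
\begin{equation*}
f(C,H_n) \le f(C^0,H_n) \le H_n\,\frac{\partial f(C^0,0)}{\partial H_n} = \mathcal{R}_0(\gamma+\theta+\mu)H_n.
\end{equation*}
Therefore the last term is bounded by $-k(\gamma+\theta+\mu)(1-\mathcal{R}_0)H_n$, which is nonpositive because $\mathcal{R}_0<1$.

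For the cross term, note that $u \le 0$ on $\Omega$ and $\alpha \le \theta$, so $(\alpha-\theta)uH_n \ge 0$. The cross term therefore does not have a favorable sign by itself. It must be absorbed, and this is the main obstacle. I would first try the bound
\begin{equation*}
(\theta-\alpha)|u|H_n \le (\theta-\alpha)C^0 H_n,
\end{equation*}
which holds since $|u| \le C^0$ on $\Omega$. The cross term can then be dominated by the linear negative term once
\begin{equation*}
k > \frac{(\theta-\alpha)C^0}{(\gamma+\theta+\mu)(1-\mathcal{R}_0)}.
\end{equation*}
With this choice, $\dot V \le -(\alpha+\mu)u^2 - c H_n$ on $\Omega$ for some $c>0$. (Alternatively, one can use Young's inequality with $H_n^2 \le C^0 H_n$.) Then $\dot V \le 0$, and $\dot V = 0$ forces $H_n = 0$ and $u = 0$, that is, the point $\widehat{E}^0$ itself. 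LaSalle's invariance principle then gives global attractivity of $\widehat{E}^0$ in $\Omega$. Combined with the local asymptotic stability from Theorem~\ref{Lemma2}, this yields GAS for \eqref{eq:2G}.

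Finally, we lift the result to the full system exactly as in Step 2 of Theorem~\ref{Theorem3new}. The limiting equation \eqref{eq:21} for $H_b$ is linear with a globally stable equilibrium. The cascade result of \cite{Seibert} then gives GAS of $E^0$ for \eqref{eq:1G}.

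A technical point must be handled here. Theorem~\ref{Theorem1}(iv) gives only limsup/liminf bounds, so $\Omega$ attracts solutions without every trajectory necessarily entering it. I would therefore apply LaSalle's theorem on a slightly enlarged set $\Omega_\varepsilon$, on which $|u| \le C^0 + \varepsilon$ and $C \le C^0 + \varepsilon$. On $\Omega_\varepsilon$ the bound $f(C,H_n) \le f(C^0,H_n)$ no longer holds exactly. We replace it using continuity of $\partial f/\partial H_n$ at $(C^0,0)$, which requires choosing $\varepsilon$ small enough that the corresponding ``$\mathcal{R}_0$'' evaluated at $C^0+\varepsilon$ stays below $1$. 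We then let $\varepsilon \to 0$.
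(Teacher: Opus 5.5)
Your argument is correct and follows the paper's overall plan: work on $\Omega$, apply LaSalle to the planar system \eqref{eq:2G}, then lift to \eqref{eq:1G} via the cascade result of \cite{Seibert}. The Lyapunov function, however, is genuinely different.

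The paper uses the semidefinite $V=\tfrac12 H_n^2$. Its derivative involves only the $H_n$-equation, and your key estimate $f(C,H_n)\le \mathcal{R}_0(\gamma+\theta+\mu)H_n$ on $\Omega$ gives $\dot V\le(\gamma+\theta+\mu)(\mathcal{R}_0-1)H_n^2$ with no cross term. The price is that $\dot V$ vanishes on the whole segment $\{H_n=0\}\cap\Omega$. So $C\to C^0$ has to come from LaSalle's invariance step: on $H_n=0$ the flow is $\dot C=\Lambda-(\alpha+\mu)C$, so the largest invariant subset there is $\{\widehat{E}^0\}$. Stability is then taken from the local result of Theorem~\ref{Lemma2}.

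Your $V=\tfrac12 u^2+kH_n$ forces you to absorb the unfavorable term $(\theta-\alpha)|u|H_n$ by taking $k$ large. In return it is positive definite and yields $\dot V\le-(\alpha+\mu)u^2-cH_n\le-\min\{2(\alpha+\mu),c/k\}\,V$. This gives stability and an exponential rate directly, so in your version the invariance step and the linearization are not really needed.

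Your observation that $\Omega$ attracts but does not absorb trajectories is a real point the paper passes over; for example, $H_n\equiv0$ with $C(0)>C^0$ never enters $\Omega$. Your repair works, and a single fixed $\varepsilon$ suffices, so there is no need to let $\varepsilon\to0$. Indeed, $\{(C,H_n)\in\mathbb{R}_+^2 : C+H_n\le C^0+\varepsilon\}$ is positively invariant, since on its boundary $\frac{d}{dt}(C+H_n)\le-(\alpha+\mu)\varepsilon$ because $\alpha\le\theta$. It is also entered in finite time.

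One caveat: the bound $\partial f(C^0+\varepsilon,0)/\partial H_n<\gamma+\theta+\mu$ for small $\varepsilon$ requires right-continuity of $S\mapsto\partial f(S,0)/\partial H_n$ at $C^0$. This holds when $f$ is $C^1$, which the paper tacitly assumes anyway (linearization, continuity of $\widetilde{f}$).
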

\begin{proof}
As in the proof of Theorem~\ref{Theorem3new}, we first prove that $\widehat{E}^0 = (C^0,\,\,H_n^0)$ is a globally asymptotically stable equilibrium point of \eqref{eq:2G}.
To accomplish this, we consider \eqref{eq:2G} on its feasible set $\Omega$ defined in \eqref{eq:Pset} and a Lyapunov function candidate given by
\begin{equation}\label{eq:L1}
V(C, H_n) = \dfrac{1}{2}H_n^2.
\end{equation}
The time derivative of $V$ along with the solutions of \eqref{eq:2G} satisfies
\begin{equation*}
    \dfrac{dV}{dt} = \dfrac{dV}{dH_n}\dfrac{dH_n}{dt} = H_n\big[f(C, H_n) - (\alpha + \gamma + \mu)H_n\big].
\end{equation*}
Consequently, $\frac{dV}{dH_n} = 0$ whenever $H_n = 0$ and for $H_n > 0$, we have
\begin{equation*}
     \dfrac{dV}{dt} = H_n^2\bigg[\dfrac{f(C, H_n)}{H_n} - (\alpha + \gamma + \mu)\bigg].
\end{equation*}
It follows from \textbf{(P4)} that the function $\frac{f(C, H_n)}{H_n}$ is decreasing for $H_n > 0$. 
Combining this with \textbf{(P2)} leads to the estimate
\begin{equation*}
\begin{split}  
   \dfrac{dV}{dt} &= H_n^2\bigg[\dfrac{f(C, H_n)}{H_n} - (\alpha + \gamma + \mu)\bigg] \le H_n^2\bigg[\dfrac{f(C^0, H_n)}{H_n} - (\alpha + \gamma + \mu)\bigg]\\
   &\le H_n^2\bigg[\lim_{H_n \to 0}\dfrac{f(C^0, H_n)}{H_n} - (\alpha + \gamma + \mu)\bigg] = H_n^2\bigg[\dfrac{\partial f(C^0, 0)}{\partial H_n} - (\alpha + \gamma + \mu)\bigg]\\
   &=  (\alpha + \gamma + \mu)(\mathcal{R}_0 - 1)H_n^2.
\end{split}
\end{equation*}
Assuming that $\mathcal{R}_0 < 1$, we can conclude that $\frac{dV}{dH_n} \le 0$ and $\frac{dV}{dt} = 0$ if and only if $H_n = 0$. 
Therefore, the largest invariant set in $E := \Big\{(C, H_n) \in \Omega|\frac{dV}{dt} = 0\Big\}$ is the set
\begin{equation*}
M := \{(C^0, 0)\} = \{\widehat{E}^0\}.
\end{equation*}
According to LaSalle's theorem \cite[Theorem 4.4]{Khalil}, we conclude that $\lim_{t \to \infty}(C(t),\,H_n(t)) = \widehat{E}^0$. 
This immediately implies the GAS of $\widehat{E}^0$.
Finally, by repeating the arguments in Step~2 of the proof of Theorem~\ref{Theorem3new}, we obtain the desired conclusion. The proof is complete.
\end{proof}

\begin{remark}\label{NoteGAS}
Based on well-known global stability results of several epidemic models \cite{Allen, Brauer, Martcheva}, we can make the following conjecture: 
 The HFE point $E^0$ of \eqref{eq:1G} is globally asymptotically stable provided that $\mathcal{R}_0 < 1$. 
This implies that the condition \eqref{eq:17new} may be imposed only for technical reasons. 
This conjecture is supported by numerical simulations conducted in Section~\ref{Sec4}.
\end{remark}

The condition \eqref{eq:17new} ($\alpha \le \theta$) emphasizes the important role of converting black-hat hackers into white-hat hackers in mitigating malicious cyber activities. 
Although increasing $\alpha$ through education and awareness campaigns is beneficial, the results suggest that enhancing $\theta$ may have a more significant and immediate impact. 
Specifically, each successful conversion reduces the number of malicious actors and increases the number of defenders.
Thus, an increase in $\theta$ has a dual influence on the system dynamics, whereas an increase in $\alpha$ only contributes to the growth of the white-hat population. 
This observation reveals the effectiveness of rehabilitation and conversion strategies in accelerating the reduction and eventual eradication of black-hat hackers from the system.

\subsection{Global Stability Analysis of the HPE Point}
Here, we derive the GAS of the HPE equilibrium point by combining the Poincar\'e-Bendixson theorem with the Bendixson-Dulac criterion for two-dimensional dynamical systems.
\begin{theorem}[GAS Analysis of the HPE point]\label{Theorem4}
If $\mathcal{R}_0 > 1$ and $H_n(0) > 0$, then the HPE point $E^*$ is not only locally asymptotically stable but also globally asymptotically stable.
\end{theorem}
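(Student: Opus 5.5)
Our plan follows the two-step cascade strategy already used in Theorem~\ref{Theorem3new}. First we prove that $\widehat{E}^* = (C^*, H_n^*)$ is globally asymptotically stable for the planar submodel \eqref{eq:2G} on the open region $\{C \ge 0,\ H_n > 0\}$. Then we pass to the full system \eqref{eq:1G} through the linear $H_b$-equation.

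For the planar step we use Theorem~\ref{Theorem1}. Solutions with $H_n(0)>0$ satisfy $C(t),H_n(t)>0$ for $t>0$, and they eventually enter the compact set
\begin{equation*}
K=\Big\{(C,H_n)\in\mathbb{R}_+^2 \,\Big|\, \tfrac{\Lambda}{\max\{\alpha+\mu,\theta+\mu\}}-\varepsilon\le C+H_n\le \tfrac{\Lambda}{\min\{\alpha+\mu,\theta+\mu\}}+\varepsilon\Big\},
\end{equation*}
so every such $\omega$-limit set is nonempty, compact and invariant.

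\textbf{Excluding periodic orbits.} We apply the Bendixson--Dulac criterion with the Dulac function $B(C,H_n)=1/H_n$ on $\{H_n>0\}$. Then
\begin{equation*}
\frac{\partial (BF_1)}{\partial C}+\frac{\partial (BF_2)}{\partial H_n}
= -\frac{1}{H_n}\frac{\partial f}{\partial C}-\frac{\alpha+\mu}{H_n}
+\frac{H_n\,\partial f/\partial H_n - f}{H_n^2}
-\frac{\Lambda+\gamma H_n-f-(\alpha+\mu)C}{H_n^2}\cdot 0 .
\end{equation*}
This expression needs care. The term $\gamma H_n/H_n=\gamma$ is constant, so its $C$-derivative vanishes. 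The remaining contributions are the first two terms, which are negative by \textbf{(P2)}, and the third, which is $\le 0$ by \textbf{(P4)}. We will redo this computation carefully, but the expected sign is strictly negative. Hence \eqref{eq:2G} has no periodic orbits, and no cycles made of equilibria and connecting orbits, inside $\{H_n>0\}$.

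\textbf{The main obstacle: excluding the boundary.} We must rule out $\widehat{E}^0$ as, or as part of, the $\omega$-limit set of an interior orbit. We will show that $\{H_n=0\}$ is a repeller when $\mathcal{R}_0>1$. The boundary line is invariant, and on it $C\to C^0$, so the stable manifold of the saddle $\widehat{E}^0$ (eigenvalues $\lambda_1<0<\lambda_2$ from Theorem~\ref{Lemma2}) is exactly the $C$-axis. Consequently no orbit with $H_n(0)>0$ converges to $\widehat{E}^0$. If $\widehat{E}^0$ belonged to the $\omega$-limit set, the Butler--McGehee lemma would force that set to contain a point of the stable manifold other than $\widehat{E}^0$. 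By invariance and closedness it would then contain the whole $C$-axis orbit, which is unbounded as $t\to-\infty$ or leaves $\mathbb{R}^2_+$. This contradicts compactness.

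As an alternative route, we can use the formula for $H_n(t)$ from the proof of Theorem~\ref{Theorem1}(ii): near $\widehat{E}^0$ we have $\widetilde f - (\gamma+\theta+\mu)>0$, so $H_n$ grows there, which gives uniform persistence $\liminf H_n>0$.

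\textbf{Conclusion.} By the Poincar\'e--Bendixson theorem, the $\omega$-limit set contains an equilibrium, and only $\widehat{E}^*$ remains. Combining this with the absence of cycles and the local stability from Theorem~\ref{Theorem2}, the $\omega$-limit set equals $\{\widehat{E}^*\}$. Finally, the limiting equation $dH_b/dt=\alpha C^*+\theta H_n^*-\mu H_b$ has the globally stable equilibrium $H_b^*$. As in Step~2 of Theorem~\ref{Theorem3new}, the cascade result of \cite{Seibert} then gives global asymptotic stability of $E^*$.
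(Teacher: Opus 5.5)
Your proposal is correct and follows the paper's proof: the Dulac function $1/H_n$ on the planar subsystem \eqref{eq:2G}, exclusion of $\widehat{E}^0$ from the $\omega$-limit set, Poincar\'e--Bendixson combined with the local stability of Theorem~\ref{Theorem2}, and finally the cascade argument of \cite{Seibert} for the $H_b$-equation. Your stable-manifold/Butler--McGehee argument spells out the boundary-exclusion step that the paper only delegates to the proof of \cite[Theorem~3.8]{Martcheva}; the stray term multiplied by $0$ in your divergence formula should simply be deleted.
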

\begin{proof}
As a consequence of Theorem \ref{Theorem1}, we only need to establish the GAS of $E^*$ with respect to the interior of $\mathbb{R}_+^3$, i.e., the set $\interior(\mathbb{R}_+^3) = \{(C,\,H_n,\,H_b)\} = \{(C,\,H_n,\,H_b) \in \mathbb{R}^3|C,\,H_n,\,H_b > 0\}$.

The proof of this theorem consists of two steps, as follows.\\
\textbf{Step 1.} $\widehat{E}^* = (C^*,\,\,H_n^*)$ is a globally asymptotically stable equilibrium point of \eqref{eq:2G} with respect to the set $\interior(\mathbb{R}_+^3)$.\\
Consider the Dulac function candidate $D\colon\interior(\mathbb{R}_+^3) \to \mathbb{R}$ given by
\begin{equation}\label{eq:22}
    D(C,\,H_n) = \dfrac{1}{H_n}.
\end{equation}
Then, the function $D$ satisfies
\begin{equation*}
\begin{split}
    \dfrac{\partial(DF_1)}{\partial C} + \dfrac{\partial(DF_2)}{\partial H_n} &= \dfrac{\partial}{\partial C}\bigg(\dfrac{\Lambda}{H_n} - \dfrac{f(C,H_n)}{H_n} - \alpha \dfrac{C}{H_n} + \gamma - \mu\dfrac{C}{H_n}\bigg) + \dfrac{\partial}{\partial H_n}\bigg(\dfrac{f(C,H_n)}{H_n} - (\gamma + \theta + \mu)\bigg)\\
    &= - \dfrac{1}{H_n}\dfrac{\partial f(C, H_n)}{\partial C} - \dfrac{\alpha + \mu}{H_n} + \dfrac{H_n\dfrac{\partial f(C, H_n)}{\partial H_n} - f(C, H_n)}{H_n^2},
\end{split}
\end{equation*}
where $F_1$ and $F_2$ are the right-hand side of \eqref{eq:2G}. We deduce from \textbf{(P4)} that
\begin{equation*}
     \dfrac{\partial(DF_1)}{\partial C} + \dfrac{\partial(DF_2)}{\partial H_n} < 0 
\end{equation*}
for all $(C, H_n) \in\interior(\mathbb{R}_+^2)$. 
According to the Dulac-Bendixson criterion (\cite[Theorem 3.6]{Martcheva}), we
conclude that \eqref{eq:2G} has no periodic orbits or graphs in the first quadrant.

Using the same arguments from the proof of \cite[Theorem~3.8]{Martcheva}, we can show that the HFE equilibrium point $\widehat{E}^0$ does not belong to the omega limit set of $\big(C(0),\,H_n(0))$. 
As a consequence,
\begin{equation*}
    \lim_{t\to\infty}\big(C(t),\,H_n(t)) = \widehat{E}^*.
\end{equation*}
Combining this with the LAS of the HPE equilibrium point established in Theorem~\ref{Theorem2}, we conclude that the GAS of $\widehat{E}^*$ holds.\\
\textbf{Step 2:}  $E^*$ is a globally asymptotically stable equilibrium point of \eqref{eq:1G}.\\
As a consequence of the conclusion in Step~1, we only need to consider the following equation
\begin{equation}\label{eq:23}
    \dfrac{d H_b}{dt} = \alpha C^* + \theta H_n^* - \mu H_b,
\end{equation}
which is obtained by substituting $C = C^*$ and $H = H^*$ into the first two equations of \eqref{eq:1G}.
It is easy to see that \eqref{eq:23} has a unique equilibrium point $H_b^{E_*} = \dfrac{\alpha C^* + \theta H_n^*}{\mu} = H_b^*$. From the formula of the exact solution of \eqref{eq:23}
\begin{equation*}
    H_b(t) = \bigg(H_b(0) - H_b^{E_*}\bigg)\mathrm{e}^{-\mu t} + H_b^{E_*},
\end{equation*}
we conclude that $H_b^{E_*}$ is globally asymptotically stable.

Using sufficient conditions for the global stabilizability of two cascade-connected nonlinear systems \cite{Seibert}, we conclude that $E^*$ of \eqref{eq:1G} is globally asymptotically stable. The proof is complete.
\end{proof}

\section{Construction of Second-Order Positivity-Preserving NSFD Scheme}\label{Sec3}
In this section, we derive  a second-order positivity-preserving NSFD scheme for \eqref{eq:1G} based on the approach proposed in \cite{HoangMatthias2026}. 
To do so, we consider \eqref{eq:1G} on the finite interval $[0,\,T]$ with  strictly positive initial data $C(0), H_n(0), H_b(0) > 0$ and rewrite it in the form
\begin{equation}\label{eq:1Gnew}
\begin{split}
  \dfrac{dC}{dt} &= U_1(C, H_n, H_b) - CV_1(C, H_n, H_b) := R_1(C, H_n, H_b),\\
  \dfrac{dH_n}{dt} &= U_2(C, H_n, H_b) - H_nV_2(C, H_n, H_b) := R_2(C, H_n, H_b),\\
  \dfrac{dH_b}{dt} &= U_3(C, H_n, H_b) - H_bV_3(C, H_n, H_b) := R_3(C, H_n, H_b),
\end{split}
\end{equation}
where $U_i, V_i\colon\interior\big(\mathbb{R}_+^3\big) \to \interior\big(\mathbb{R}_+\big)$ are given by
\begin{equation}\label{eq:2Gnew}
\begin{split}
U_1(C, H_n, H_b) &=  \Lambda + \gamma H_n\\
V_1(C, H_n, H_b) &= \dfrac{f(C, H_n)}{C} + \alpha + \mu,\\
U_2(C, H_n, H_b) &= f(C, H_n),\\
 V_2(C, H_n, H_b) &= \gamma + \theta  + \mu,\\
U_3(C, H_n, H_b) &=  \alpha C + \theta H_n,\\
V_3(C, H_n, H_b) &= \mu.
\end{split}
\end{equation}

Following the approach in \cite{Cresson, Dimitrov3, Wood0}, we obtain a positivity-preserving NSFD scheme for \eqref{eq:1Gnew}: 
\begin{equation}\label{eq:NSFD1}
\begin{split}
\dfrac{C^{k + 1} - C^k}{\phi(\Delta t)} &= U_1(C^k, H_n^k, H_b^k) - C^{k+1}V_1(C^k, H_n^k, H_b^k),\\
\dfrac{H_n^{k + 1} - H_n^k}{\phi(\Delta t)} &= U_2(C^k, H_n^k, H_b^k) - H_n^{k+1}V_2(C^k, H_n^k, H_b^k),\\
\dfrac{H_b^{k + 1} - H_b^k}{\phi(\Delta t)} &= U_3(C^k, H_n^k, H_b^k) - H_b^{k+1}V_3(C^k, H_n^k, H_b^k),
\end{split}
\end{equation}
where $\Delta t=T/N$ with $N \ge 1$ is the step size.
$\big(C^k,\,H_n^k,\,H_b^k\big)^\top$ with $k \ge 1$ are the intended approximations for $\big(C(t^k),\,H_n(t^k),\,H_b(t^k)\big)$ with $t^k = k\Delta t$, respectively.
$\phi(\Delta t)$ satisfies $\phi(\Delta t) = \Delta t + \mathcal{O}(\Delta t^2)$ as $\Delta t\to0$ and  is called a \textit{denominator function}.
Note that \eqref{eq:NSFD1} is equivalent to
\begin{equation*}
\begin{split}
C^{k + 1} &=  \dfrac{C^k + \phi(\Delta t)U_1(C^k, H_n^k, H_b^k)}{1 + \phi(\Delta t)V_1(C^k, H_n^k, H_b^k)},\\
H_n^{k + 1} &=  \dfrac{H_n^k + \phi(\Delta t)U_2(C^k, H_n^k, H_b^k)}{1 + \phi(\Delta t)V_2(C^k, H_n^k, H_b^k)},\\
H_b^{k + 1} &=  \dfrac{H_b^k + \phi(\Delta t)U_3(C^k, H_n^k, H_b^k)}{1 + \phi(\Delta t)V_3(C^k, H_n^k, H_b^k)},
\end{split}
\end{equation*}
which is explicit and satisfies $C^{k}, H_n^k, H_b^k > 0$ for $k>0$ whenever $C^0, H_n^0, H_b^0 > 0$. Therefore, \eqref{eq:NSFD1} preserves the positivity of the solutions for all $\Delta t>0$.
However, as mentioned in \cite{Cresson}, the NSFD scheme of the form \eqref{eq:NSFD1} is only convergent of order 1. 

Another first-order positivity-preserving scheme can be derived from the generalized NSFD method constructed by Wood and Kojouharov \cite{Wood1}. 
This scheme examines the sign of the right-hand side function of \eqref{eq:1G} at each iteration step and is only first-order convergent (see Table~\ref{Table4}).

Using the approach introduced in \cite{HoangMatthias2026}, we formulate another NSFD scheme for \eqref{eq:1Gnew} in the form
\begin{align*}
\dfrac{C^{k + 1} - C^k}{\Phi_1(\Delta t, C^k, H_n^k, H_b^k)} 
&= U_1(C^k, H_n^k, H_b^k) - C^{k+1}V_1(C^k, H_n^k, H_b^k)\\
&\quad+ \dfrac{\Delta t}{2} A_1(C^k, H_n^k, H_b^k) - \dfrac{\Delta t}{2} \dfrac{B_1\big(C^k, H_n^k, H_b^k\big)}{C^k},\\
\dfrac{H_n^{k + 1} - H_n^k}{\Phi_2\big(\Delta t, C^k, H_n^k, H_b^k\big)} 
&= U_2(C^k, H_n^k, H_b^k) - H_n^{k+1}V_2(C^k, H_n^k, H_b^k)
\stepcounter{equation}\tag{\theequation}\label{eq:NSFD2}\\
&\quad+ \dfrac{\Delta t}{2} A_2(C^k, H_n^k, H_b^k) - \dfrac{\Delta t}{2} \dfrac{B_2(C^k, H_n^k, H_b^k)}{H_n^k},\\
\dfrac{H_b^{k + 1} - H_b^k}{\Phi_3\big(\Delta t, C^k, H_n^k, H_b^k\big)} 
&= U_3(C^k, H_n^k, H_b^k) - H_b^{k+1}V_3(C^k, H_n^k, H_b^k)\\
&\quad+ \dfrac{\Delta t}{2} A_3(C^k, H_n^k, H_b^k) - \dfrac{\Delta t}{2} \dfrac{B_3(C^k, H_n^k, H_b^k)}{H_n^k},
\end{align*}
where $A_i, B_i\colon \interior\big(\mathbb{R}_+^3\big) \to\interior\big(\mathbb{R}_+\big)$ are defined by
\begin{align*}
A_1(C, H_n, H_b) &= \bigg(\dfrac{\partial f(C, H_n)}{\partial C} + \mu + \alpha\bigg)\big[f(C, H_n) + (\mu + \alpha)C\big] + \dfrac{f(C, H_n)}{\partial H_n}(\gamma + \theta + \mu)H_n + \gamma f(C, H_n),\\
B_1(C, H_n, H_b) &= \bigg(\dfrac{\partial f(C, H_n)}{\partial C} + \mu + \alpha\bigg)(\Lambda + \gamma H_n) + \gamma(\gamma + \theta + \mu)H_n + \dfrac{\partial f(C, H_n)}{\partial H_n}f(C, H_n),\\
A_2(C, H_n, H_b) &= \dfrac{\partial f(C, H_n)}{\partial C}(\Lambda + \gamma H_n) + \dfrac{\partial f(C, H_n)}{\partial H_n}f(C, H_n) + (\gamma + \theta  + \mu)^2 H_n,
\stepcounter{equation}\tag{\theequation}\label{eq:NSFD3}\\
B_2(C, H_n, H_b) &= \dfrac{\partial f(C, H_n)}{\partial C}\big[f(C, H_n) + (\alpha + \mu)C\big] +  \dfrac{\partial f(C, H_n)}{\partial H_n}(\gamma + \theta + \mu)H_n + (\gamma + \theta + \mu)f(C, H_n),\\
A_3(C, H_n, H_b) &= \alpha(\Lambda + \gamma H_n) + \theta f(C, H_n) + \mu^2H_b,\\
B_3(C, H_n, H_b) &= \alpha\big[f(C, H_n) - (\alpha + \mu)C\big] + \theta(\gamma + \theta + \mu)H + \mu(\alpha C + \theta H),
\end{align*}
and the denominator functions $\Phi_i(\Delta t, C, H_n)$ are given by 
\begin{equation}\label{eq:NSFD4}
\begin{split}
\Phi_1(\Delta t, C, H_n, H_b) &= \dfrac{\mathrm{e}^{2V_1(C, H_n, H_b)\Delta t} - 1}{2V_1(C, H_n, H_b)},\\
\Phi_2(\Delta t, C, H_n, H_b) &= \dfrac{\mathrm{e}^{2V_2(C, H_n, H_b)\Delta t} - 1}{2V_2(C, H_n, H_b)},\\
\Phi_3(\Delta t, C, H_n, H_b) &= \dfrac{\mathrm{e}^{2V_3(C, H_n, H_b)\Delta t} - 1}{2V_3(C, H_n, H_b)}.
\end{split}
\end{equation}
Note that the functions $A_i, B_i$ and $\Phi_i$ ($i = 1, 2, 3$) satisfy
\begin{equation*}
\begin{split}
&A_i(C, H_n, H_b) + B_i(C, H_n, H_b) \\
&= \dfrac{\partial R_i(C, H_n, H_b)}{\partial C}R_1(C, H_n, H_b) + \dfrac{\partial R_i(C, H_n, H_b)}{\partial H_n}R_2(C, H_n, H_b) + \dfrac{\partial R_i(C, H_n, H_b)}{\partial H_b}R_3(C, H_n, H_b),\\
&\dfrac{\partial^2\Phi_i(\Delta t, C, H_n, H_b)}{\partial \Delta t^2}\bigg|_{\Delta t = 0} = V_i(C, H_n, H_b), 
\end{split}
\end{equation*}
where $R_i(C, H_n, H_b)$ ($i = 1, 2, 3$) are defined in \eqref{eq:1Gnew}--\eqref{eq:2Gnew}, respectively.

Furthermore, \eqref{eq:NSFD1} can be transformed into the explicit form
\begin{align*}
C^{k + 1} &=  \dfrac{C^k + \Phi_1(\Delta t, C^k, H_n^k, H_b^k)U_1(C^k, H_n^k, H_b^k) + \dfrac{\Delta t}{2}\Phi_1(\Delta t, C^k, H_n^k, H_b^k)A_1(C^k, H_n^k, H_b^k)}{1 + \Phi_1(\Delta t, C^k, H_n^k, H_b^k)V_1(C^k, H_n^k, H_b^k) + \dfrac{\Delta t}{2}\Phi_1(\Delta t, C^k, H_n^k, H_b^k)\dfrac{B_1(C^k, H_n^k, H_b^k)}{C^k}},\\
H_n^{k + 1} &=  \dfrac{H_n^k + \Phi_2(\Delta t, C^k, H_n^k, H_b^k)U_2(C^k, H_n^k, H_b^k) + \dfrac{\Delta t}{2}\Phi_2(\Delta t, C^k, H_n^k, H_b^k)A_2(C^k, H_n^k, H_b^k)}{1 + \Phi_2(\Delta t, C^k, H_n^k, H_b^k)V_2(C^k, H_n^k, H_b^k) + \dfrac{\Delta t}{2}\Phi_2(\Delta t, C^k, H_n^k, H_b^k)\dfrac{B_2(C^k, H_n^k, H_b^k)}{H_n^k}},
\stepcounter{equation}\tag{\theequation}\label{eq:NSFD5}\\
H_b^{k + 1} &=  \dfrac{H_b^k + \Phi_3(\Delta t, C^k, H_n^k, H_b^k)U_3(C^k, H_n^k, H_b^k) + \dfrac{\Delta t}{2}\Phi_3(\Delta t, C^k, H_n^k, H_b^k)A_3(C^k, H_n^k, H_b^k)}{1 + \Phi_3(\Delta t, C^k, H_n^k, H_b^k)V_3(C^k, H_n^k, H_b^k) + \dfrac{\Delta t}{2}\Phi_3(\Delta t, C^k, H_n^k, H_b^k)\dfrac{B_3(C^k, H_n^k, H_b^k)}{H_b^k}},
\end{align*}
which is useful in computing and implementing \eqref{eq:NSFD1}.

Based on \cite[Theorems 1 and 2]{HoangMatthias2026}, we obtain the following result.
\begin{theorem}[Second-Order Positivity-Preserving NSFD Scheme]\label{TheoremNSFD}
The following assertions hold for the NSFD scheme \eqref{eq:NSFD1}:
\begin{itemize}
\item[(i)] It preserves the positivity of the solutions for all finite step size, i.e.\ for any $\Delta t > 0$
\begin{equation*}
C^0,\, H_n^0,\, H_b^0 > 0 \Longrightarrow C^k, H_n^k, H_b^k > 0, \quad k > 0.
\end{equation*}
\item[(ii)] It is convergent of order two.
\end{itemize}
\end{theorem}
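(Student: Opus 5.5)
The statement concerns the scheme \eqref{eq:NSFD2}, equivalently its explicit form \eqref{eq:NSFD5}; the label \eqref{eq:NSFD1} in the statement is a slip. The plan is to verify the structural hypotheses of \cite[Theorems 1 and 2]{HoangMatthias2026} for the splitting \eqref{eq:1Gnew}--\eqref{eq:2Gnew}, together with the auxiliary functions \eqref{eq:NSFD3}--\eqref{eq:NSFD4}. Then both assertions follow from that reference. For transparency I would also give the direct arguments.

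\textbf{Positivity (i).} I argue by induction on $k$. Suppose $C^k, H_n^k, H_b^k > 0$. By \textbf{(P1)}, $f(C^k,H_n^k)>0$, and by \textbf{(P2)}--\textbf{(P3)} both partial derivatives of $f$ are nonnegative. Hence every $U_i, V_i, A_i$ is strictly positive. Each $\Phi_i$ is positive for $\Delta t>0$, because $V_i>0$ gives $(\mathrm{e}^{2V_i\Delta t}-1)/(2V_i)>0$.

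In \eqref{eq:NSFD5}, the numerator is then a sum of a positive term and nonnegative terms. The denominator is $1+\Phi_iV_i+\tfrac{\Delta t}{2}\Phi_i B_i/(\cdot)$, so it is positive provided $B_i\ge0$. For $B_1$ and $B_2$ this is immediate from \textbf{(P2)}--\textbf{(P3)}.

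For $B_3$ the sign is not evident: the term $\alpha[f-(\alpha+\mu)C]$ can be negative. Here I would follow the device in \cite{HoangMatthias2026} and move negative contributions between $A_3$ and $-B_3$, so that both parts are nonnegative while $A_3+B_3$ is unchanged. This is legitimate because only the sum $A_i+B_i$ matters for consistency. I expect this sign bookkeeping to be the delicate point of (i).

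\textbf{Second order (ii).} I would expand the local truncation error in powers of $\Delta t$. Since $\Phi_i=\Delta t+V_i\Delta t^2+\mathcal{O}(\Delta t^3)$, solving \eqref{eq:NSFD2} for the update gives, for the first component,
\[
C^{k+1}-C^k=\Phi_1\bigl(R_1-(C^{k+1}-C^k)V_1\bigr)+\tfrac{\Delta t^2}{2}\bigl(A_1-B_1/C^k\bigr)+\mathcal{O}(\Delta t^3),
\]
where $R_1$ is evaluated at step $k$. Substituting the first-order approximation $C^{k+1}-C^k=\Delta t R_1+\mathcal{O}(\Delta t^2)$, the $V_1\Delta t^2 R_1$ terms cancel. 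This cancellation is exactly why $\Phi_i''(0)$ is matched to $V_i$.

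What remains is $\Delta t R_1+\tfrac{\Delta t^2}{2}\,\tfrac{d R_1}{dt}+\mathcal{O}(\Delta t^3)$. This requires that the $A,B$ correction reproduce $\tfrac{d R_1}{dt}=\nabla R_1\cdot R$, i.e.\ the stated identity relating $A_i+B_i$ to $\nabla R_i\cdot R$, interpreted with the division by the state variable built in. I would check this identity component-wise; that check is the main computational obstacle.

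The result matches the Taylor expansion of the exact solution up to $\mathcal{O}(\Delta t^3)$, so the scheme is consistent of order two. Convergence of order two then follows by the standard argument for one-step methods. The increment function is locally Lipschitz on compact subsets of $\interior(\mathbb{R}^3_+)$. Solutions stay in such a compact set on $[0,T]$, by Theorem~\ref{Theorem1} and positivity of the initial data. Combining local Lipschitz continuity, consistency of order two, and a discrete Gronwall argument gives a global error of $\mathcal{O}(\Delta t^2)$.
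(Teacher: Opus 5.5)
Your overall route matches the paper's: its entire proof is the appeal to \cite[Theorems 1 and 2]{HoangMatthias2026}. You are also right that the statement must concern \eqref{eq:NSFD2}/\eqref{eq:NSFD5}, since \eqref{eq:NSFD1} is only first order. Several of your steps are fine: the induction on the explicit form \eqref{eq:NSFD5}, the cancellation from $\Phi_i=\Delta t+V_i\Delta t^2+\mathcal{O}(\Delta t^3)$ (so $\Phi_i''(0)=2V_i$, not $V_i$ as printed), and the Gronwall step. But verifying the cited theorems' hypotheses is the real content here, and as you wrote it that verification fails at two points. In both places you took misprints in the paper at face value instead of recomputing.

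\textbf{Consistency.} The printed implicit form \eqref{eq:NSFD2} is not equivalent to \eqref{eq:NSFD5}. The implicit form that is equivalent carries $-\tfrac{\Delta t}{2}\tfrac{B_1}{C^k}\,C^{k+1}$. The same holds for the other components: the $B_2$ term carries $H_n^{k+1}$, and the third uses $B_3/H_b^k$ times $H_b^{k+1}$, not $B_3/H_n^k$. Since $C^{k+1}/C^k=1+\mathcal{O}(\Delta t)$, the $\Delta t^2$ coefficient is $\tfrac12(A_1-B_1)$. Your $\tfrac12(A_1-B_1/C^k)$ is not $\tfrac12\nabla R_1\cdot R$, and reading the division as "built in" does not repair this. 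The identity you need is $A_i-B_i=\nabla R_i\cdot R$; the paper's ``$A_i+B_i$'' is a misprint. For $i=1,2$ this identity holds exactly with the printed formulas.

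\textbf{The sign of $B_3$.} Expanding $\nabla R_3\cdot R=\alpha R_1+\theta R_2-\mu R_3$ reproduces $A_3$. It gives $B_3=\alpha[f+(\alpha+\mu)C]+\theta(\gamma+\theta+\mu)H_n+\mu(\alpha C+\theta H_n)$. So the minus sign in the paper is a misprint, $B_3\ge 0$ is immediate, and positivity needs no device. Your rebalancing could not rescue the printed formulas. The printed $A_3-B_3$ exceeds $\nabla R_3\cdot R$ by $2\alpha(\alpha+\mu)C$. Any reshuffle that preserves it therefore leaves a nonzero $\mathcal{O}(\Delta t^2)$ local error in the $H_b$ component, which gives only first-order convergence. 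Also, the quantity to preserve is $A_3-B_3$, not $A_3+B_3$.

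\textbf{Minor points.} The Lipschitz/Taylor step needs $f\in C^2$, or at least $\nabla f$ locally Lipschitz; this is more than \textbf{(P1)} provides. And the iterates stay in a compact subset of $\interior(\mathbb{R}^3_+)$ only for small $\Delta t$, via the usual bootstrap argument.
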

In  the next section, numerical experiments will be conducted using the second-order positivity-preserving NSFD scheme \eqref{eq:NSFD1} to simulate the dynamical behaviour of the continuous-time model \eqref{eq:1G}.
The results suggest that the NSFD scheme preserves the positivity and asymptotic stability of \eqref{eq:1G}.

\section{Numerical Experiments}\label{Sec4}
In this section, we perform numerical experiments to validate  the theoretical findings. To this end, we consider the Beddington-DeAngelis response function \cite{Beddington, DeAngelis}
\begin{equation}\label{eq:Nonlinear}
   f(C, H_n) = \dfrac{\beta CH_n}{1 + \kappa_1 C + \kappa_2 H_n}, \quad \beta > 0, \quad \kappa_i \ge0.
\end{equation}
The basic reproduction number of \eqref{eq:1G} is computed by
\begin{equation*}
\mathcal{R}_0 = \dfrac{1}{\gamma + \theta + \mu}\dfrac{\beta\Lambda}{\bigg(1 + \kappa_1\dfrac{\Lambda}{\alpha + \mu}\bigg)(\alpha + \mu)}.
\end{equation*}

\subsection{An Error Analysis of the NSFD Schemes}
In this subsection, we provide an error analysis of the first- and second-order NSFD schemes formulated in Section~\ref{Sec3}. 
Finally, we consider \eqref{eq:1G} with the following parameters
\begin{equation*}
\begin{split}
  &\Lambda = 10^4, \quad  \alpha = 0.025, \quad\quad \gamma = 0.02, \quad \mu = 0.01,\\
  &\theta = 0.025, \quad \beta = 10^{-6}, \quad \kappa_1 = 0.01, \quad \kappa_2 = 0.02,
\end{split}
\end{equation*}
and the initial data
\begin{equation*}
     C(0) = 10^4, \quad H_n(0) = 10^3, \quad H_b(0) = 10^3.
\end{equation*}
To estimate errors and the \textit{rate of convergence} (ROC), we use the numerical approximation obtained by employing an 11-stage, eighth-order Runge-Kutta method \cite{Copper} with a step size $\Delta t = 10^{-6}$ on the interval $[0,\, 1]$, as a reference solution. 
Then, we compute the errors as
\begin{equation*}
\begin{split}
&e^k := \big|C^k - C(t^k)\big| + \big|H_n^k - H_n(t^k)\big| + \big|H_b^k - H_b(t^k)\big|, 
\qquad t^k = k\Delta t,\,\,\,\Delta t=\dfrac{1}{N},\,\,\,0 \le k \le N,\\
&\err^{\rm max} = \max_{k}e^k,\\
&\err^{\rm final} = e^N, \\
&\err^{\rm average} = \dfrac{\mathlarger{\sum}_{k = 1}^Ne_k}{N},\\
&\err^{\rm relative} = \dfrac{\big|C^N - C(t^N)\big|}{\big|C(t^N)\big|} + \dfrac{\big|H_n^N - H_n(t^N)\big|}{\big|H_n(t^N)\big|} + \dfrac{\big|H_b^N - H_b(t^N)\big|}{\big|H_b(t^N)\big|},
\end{split}
\end{equation*}
where $\big(C(t),\,H_n(t),\,H_b(t)\big)$ stands for the reference solution. 
Additionally, the ROC is approximated as (see \cite{Ascher}):
\begin{equation*}
ROC := \log_{\bigg(\dfrac{\Delta t_1}{\Delta t_2}\bigg)}\bigg(\dfrac{\err^{\rm final}(\Delta t_1)}{\err^{\rm final}(\Delta t_2)}\bigg).
\end{equation*}

The errors and ROC of the first- and second-order NSFD schemes \eqref{eq:NSFD2} and \eqref{eq:NSFD1}, the Wood-Kojouharov's scheme \cite{Wood1} and the standard explicit Euler scheme \cite{Ascher, Stuart}, are reported in Tables~\ref{Table1}--\ref{Table4}, respectively.

The results demonstrate that the second-order positivity-preserving NSFD scheme \eqref{eq:NSFD2} significantly improves the accuracy of the remaining three first-order schemes. 
Furthermore, it preserves the positivity of the solutions regardless of the step size. 
As will be demonstrated in Subsection \ref{Subsec4.2}, \eqref{eq:NSFD2} preserves not only the positivity but also the asymptotic stability properties of the continuous-time model. 
Therefore, it is particularly useful for simulating the dynamics of continuous-time models over long time periods.
\begin{table}[htb]
\centering
\caption{The errors and ROC of the second-order positivity-preserving NSFD scheme \eqref{eq:NSFD2}.}\label{Table1}
\begin{tabular}{cccccccccccccccccccc}
\hline
$\Delta t$ & $\err^{\rm max}$ & $\err^{\rm average}$ & $\err^{\rm final}$ & $\err^{\rm relative}$ & ROC \\
\hline
$2 \times 10^{-1}$ & 1.2707 & 0.6465 & 1.2707 & 5.7179e-005 &\\
$10^{-1}$& 0.3156 & 0.1609 & 0.3156 & 1.4176e-005 & 2.0096\\
$5 \times 10^{-2} $ & 0.0786 &  0.0401 & 0.0786 & 3.5291e-006 & 2.0048 \\
$10^{-2}$ & 0.0031 &  0.0016 & 0.0031 & 1.4069e-007 & 2.0017\\
$5 \times 10^{-3}$ & 7.8397e-004 & 4.0036e-004 & 7.8397e-004 & 3.5158e-008 & 2.0005 \\
$10^{-3}$ & 3.1358e-005 & 1.6015e-005 & 3.1358e-005 &  1.4066e-009 & 2.0000\\
$5 \times 10^{-4}$ &  7.8647e-006 & 4.0169e-006 & 7.8647e-006 & 3.5372e-010 & 1.9954\\
$10^{-4}$ & 2.9680e-007 & 1.5147e-007 & 2.9680e-007 &  1.2775e-011 & 2.0362\\
\hline
\end{tabular}
\end{table}

\begin{table}[htb]
\centering
\caption{The errors and ROC of the first-order positivity-preserving NSFD scheme \eqref{eq:NSFD1} using $\phi(\Delta t) = \Delta t$.}\label{Table2}
\begin{tabular}{cccccccccccccccccccc}
\hline
$\Delta t$ & $\err^{\rm max}$ & $\err^{\rm average}$ & $\err^{\rm final}$ & $\err^{\rm relative}$ & ROC \\
\hline
$2 \times 10^{-1}$ &  42.2252 & 21.2472 & 42.2252 &  0.0019 &\\
$10^{-1}$ & 21.1528 & 10.6524 &  21.1528 & 9.5738e-004 & 0.9973\\
$5 \times 10^{-2}$ &  10.5865 &  5.3334 & 10.5865 & 4.7909e-004 & 0.9986\\
$10^{-2}$ &  2.1189 &  1.0678 &  2.1189 & 9.5884e-005 &  0.9995\\
$5 \times 10^{-3}$ &  1.0596 & 0.5340 & 1.0596 & 4.7946e-005 &  0.9999\\
$10^{-3}$ & 0.2119 & 0.1068 &  0.2119 & 9.5899e-006 &  1.0000\\
$5 \times 10^{-4}$ &  0.1060 & 0.0534 & 0.1060 & 4.7950e-006 & 1.0000\\
$10^{-4}$ & 0.0212 & 0.0107 & 0.0212 & 9.5900e-007 &  1.0000\\
\hline
\end{tabular}
\end{table}

\begin{table}[htb]
\centering
\caption{The errors and ROC of the Wood-Kojouharov's method \cite{Wood1} using $\phi(\Delta t) = \Delta t$.}\label{Table3}
\begin{tabular}{cccccccccccccccccccc}
\hline
$\Delta t$ & $\err^{\rm max}$ & $\err^{\rm average}$ & $\err^{\rm final}$ & $\err^{\rm relative}$ & ROC \\
\hline
$2 \times 10^{-1}$ &  38.7350 & 19.5973 & 38.7350 &  0.0015 &\\
$10^{-1}$ & 19.3212 & 9.7895 & 19.3212 & 7.3549e-004 &  1.0035 \\
$5 \times 10^{-2}$ &  9.6491 &  4.8925 & 9.6491 & 3.6733e-004 & 1.0017\\
$10^{-2}$ &  1.9280 & 0.9781 &  1.9280 & 7.3400e-005 & 1.0006\\
$5 \times 10^{-3}$ &  0.9639 & 0.4890 & 0.9639 & 3.6696e-005 & 1.0002\\
$10^{-3}$ & 0.1928 & 0.0978 &  0.1928 &  7.3385e-006 & 1.0001 \\
$5 \times 10^{-4}$ &  0.0964 & 0.0489 &  0.0964 & 3.6692e-006 & 1.0000\\
$10^{-4}$ &  0.0193 &   0.0098 & 0.0193 & 7.3383e-007 &  1.0000\\
\hline
\end{tabular}
\end{table}

\begin{table}[htb]
\centering
\caption{The errors and ROC of the standard explicit Euler scheme.}\label{Table4}
\begin{tabular}{cccccccccccccccccccc}
\hline
$\Delta t$ & $\err^{\rm max}$ & $\err^{\rm average}$ & $\err^{\rm final}$ & $\err^{\rm relative}$ & ROC \\
\hline
$2 \times 10^{-1}$ &  38.7912 & 19.6238 & 38.7912 & 0.0015 &\\
$10^{-1}$ & 19.3399 & 9.7978 & 19.3399 & 7.3907e-004 & 1.0042 \\
$5 \times 10^{-2}$ &  9.6561 & 4.8954 &  9.6561 & 3.6891e-004 & 1.0021\\
$10^{-2}$ &  1.9290 & 0.9785 & 1.9290 & 7.3684e-005 &   1.0007 \\
$5 \times 10^{-3}$ &  0.9644 & 0.4892 &  0.9644 & 3.6836e-005 & 1.0002\\
$10^{-3}$ & 0.1929 & 0.0978 & 0.1929 & 7.3661e-006 & 1.0001\\
$5 \times 10^{-4}$ &  0.0964 & 0.0489 &  0.0964 & 3.6830e-006 & 1.0000\\
$10^{-4}$ & 0.0193 & 0.0098 & 0.0193 & 7.3659e-007 & 1.0000\\
\hline
\end{tabular}
\end{table}

\subsection{Dynamics of the Continuous-Time model by the NSFD Scheme}\label{Subsec4.2}
In this subsection, we use the second-order positivity-preserving NSFD scheme \eqref{eq:NSFD2} to simulate the dynamics of the continuous-time model \eqref{eq:1G}.
To do so, we consider \eqref{eq:1G} with the parameters given in Table~\ref{Table5}.
The numerical approximations, generated by implementing \eqref{eq:NSFD2} with three different step sizes on the interval $[0,\,\,\,10^3]$, are depicted in Figures~\ref{Fig:1}--\ref{Fig:3}. 
Each blue curve in these figures represents a solution trajectory in the phase space associated with a specific initial condition. 
The red circle marks the position of the globally asymptotically stable equilibrium (HFE or HPE), whereas the yellow arrows indicate the direction of the system evolution.

The numerical results show that the qualitative behavior of the computed solutions aligns well with the theoretical analysis established in Section \ref{Sec2}. 
Furthermore, \eqref{eq:NSFD2} preserves the positivity of the solutions and the asymptotic stability properties of the continuous-time model \eqref{eq:1G}. 
These findings further demonstrate the advantages of the second-order positivity-preserving NSFD approach.

Note that \eqref{eq:17new} does not hold for the parameter set 2 in Table \ref{Table5}. 
However, the HFE point remains globally asymptotically stable as shown in Figure \ref{Fig:2}. This supports the conjecture given in Remark~\ref{NoteGAS}.

\begin{remark}
The Beddington-DeAngelis response function \eqref{eq:Nonlinear} reduces to the bilinear incidence function and the saturated incidence function if $\kappa_1 = \kappa_2 = 0$ and $\kappa_1 = 0$, respectively.
Therefore, \eqref{eq:1G} can provide a wider range of realistic scenarios. 
In particular, if $f$ is considered as a control parameter, this feature is useful for parameter estimation problems.
\end{remark}

\begin{table}[htb]
\centering
\caption{The sets of the parameters used in numerical computation}\label{Table5}
\begin{tabular}{cccccccccccccccccccccccccc}
\hline
Set& $\Lambda$ & $\alpha$ & $\gamma$ & $\mu$ & $\theta$  & $\beta$ & $\kappa_1$ & $\kappa_2$ & $\mathcal{R}_0$\\[2pt]
\hline
1& $10^4$ & 0.025 & 0.02 & 0.01 & 0.03 & $ 5.5 \times 10^{-5}$ & 0.001 & 0.002 & 0.9135 \\[2pt]
\hline
2 & $10^5$ & 0.02 & 0.025 & 0.02 & 0.01 & $10^{-4}$ & 0.005 & 0.005 & 0.3636\\[2pt]
\hline
3 & $10^6$ & 0.02 & 0.025 & 0.02 & 0.01 & $10^{-3}$ & 0.01 & 0.02 &  1.8182 \\[2pt]
\hline
\end{tabular}\vspace*{0.5cm}
\begin{tabular}{cccccccccccccccccccccccccccc}
\hline
Set&  GAS equilibrium point & Remark\\[2pt]
\hline
1&  $(2.8571 \times 10^5,\,\,\, 0,\,\,\, 7.1429 \times 10^5)$ & \eqref{eq:17new} holds\\[2pt]
\hline
2 & $(25 \times 10^5,\,\,\,0,\,\,\,25 \times 10^5)$&{\eqref{eq:17new} does not hold}\\[2pt]
\hline
3 &  $(1.9130 \times 10^7,\,\,\, 7.8260 \times 10^6,\,\,\,  2.3043 \times 10^7)$& $\mathcal{R}_0 > 1$\\[2pt]
\hline
\end{tabular}
\end{table}

\section{Concluding Remarks and Discussions}
The first conclusion of this work is that we have proposed and analyzed a generalized compartment model representing hacker dynamics in cybersecurity systems. 
This model extends a recognized model formulated in \cite{Hassouni}.
The proposed model uses general nonlinear incidence rate functions instead of bilinear ones, which makes it more flexible and applicable to a wider range of scenarios.  
After proposing the generalized model, we investigated the positivity and boundedness of its solutions. 
We also computed the basic reproduction number and determined the hacker-free and hacker-persistent equilibrium points. 
Finally, we established their local and global asymptotic stability (GAS) properties. 
Consequently, the global dynamics of the generalized compartment model have been fully characterized. 
The proposed framework is not limited to cybersecurity applications, but can also be used to study the spread of information on the internet and online social networks.

Second, we adopted the approach introduced in \cite{HoangMatthias2026} to derive a second-order positivity-preserving (NSFD) scheme for numerical simulations. 
Our analysis and numerical simulations showed that the derived NSFD scheme significantly improves the accuracy of first-order NSFD schemes.
Moreover, the NSFD scheme preserves the positivity and asymptotic stability properties of the continuous-time model. 
Therefore, it is particularly appropriate for simulating the dynamics of continuous-time models over long time periods.

Lastly, several numerical experiments were conducted to validate the theoretical findings and demonstrate the advantages of the implemented NSFD scheme. 
The numerical results strongly support the theoretical assertions.

In the near future, we plan to explore the practical applications of these findings. 
Conversely, constructing higher-order numerical methods that preserve the dynamical properties of the generalized continuous-time model will be a topic of interest.

\begin{figure}[H]
\subfloat[$\Delta t = 5.0$]{%
\includegraphics[height=10cm,width=9cm]{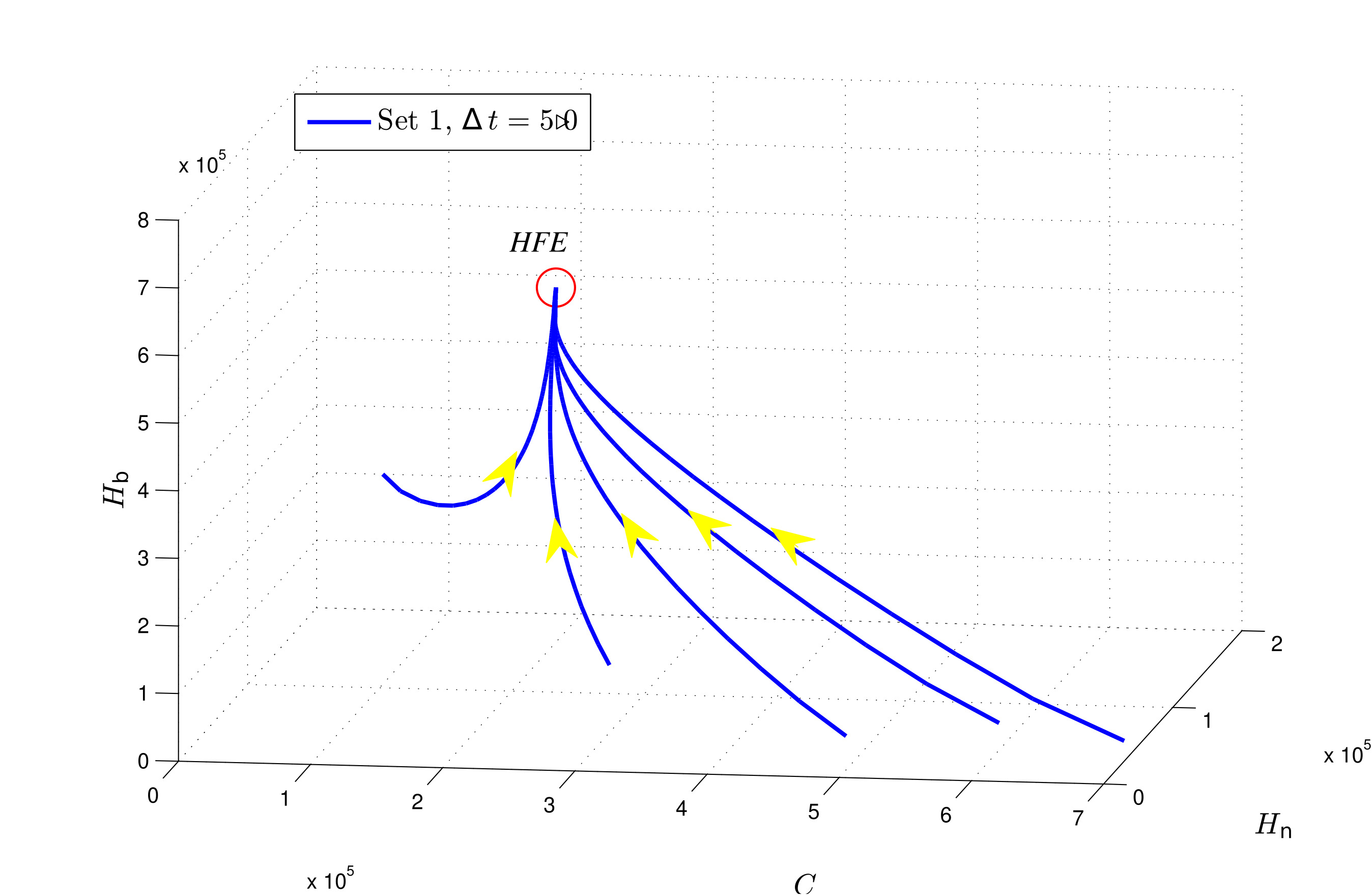}
\label{Figure:1a}
}\hfill
\subfloat[$\Delta t = 1.0$]{%
\includegraphics[height=10cm,width=9cm]{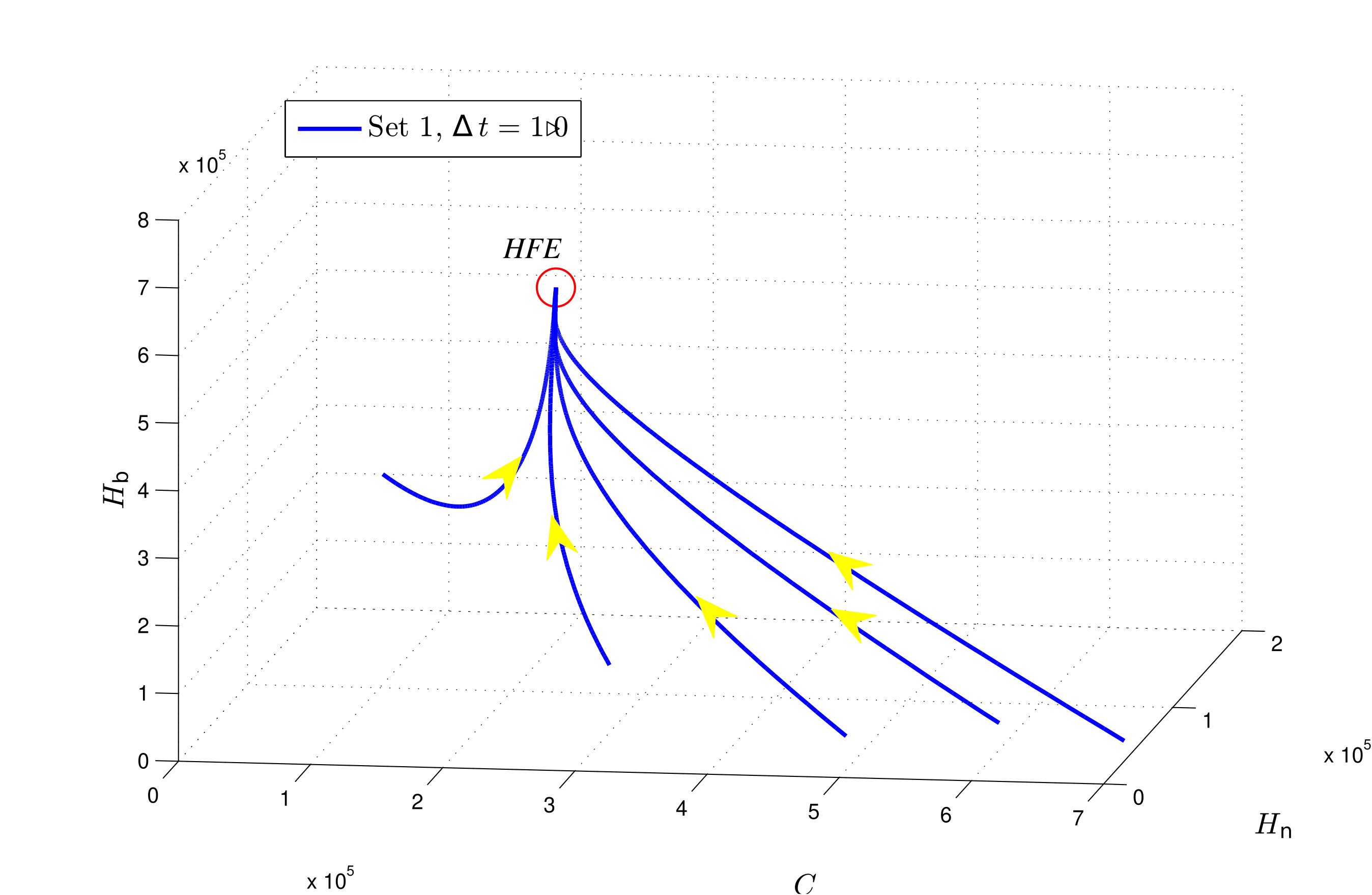}
\label{Figure:1b}
}\hfill
\subfloat[$\Delta t = 10^{-3}$]{%
\includegraphics[height=10cm,width=16cm]{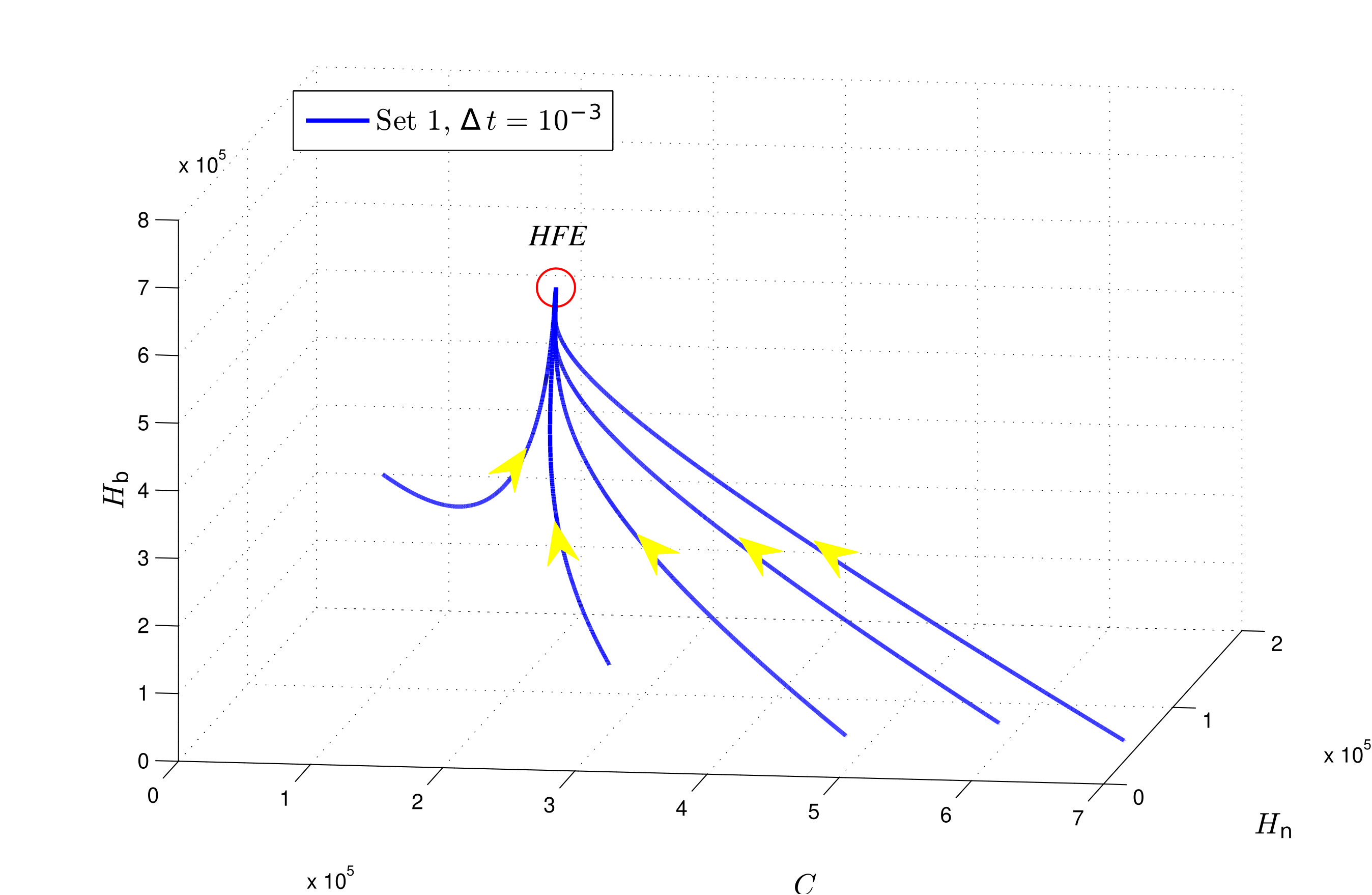}
\label{Figure:1c}
}
\caption{The numerical solutions  generated  by the second-order positivity-preserving NSFD scheme \eqref{eq:NSFD2} using parameter set 1 in Table~\ref{Table5}.}
\label{Fig:1}
\end{figure}

\begin{figure}[H]
\subfloat[$\Delta t = 5.0$]{%
\includegraphics[height=10cm,width=9cm]{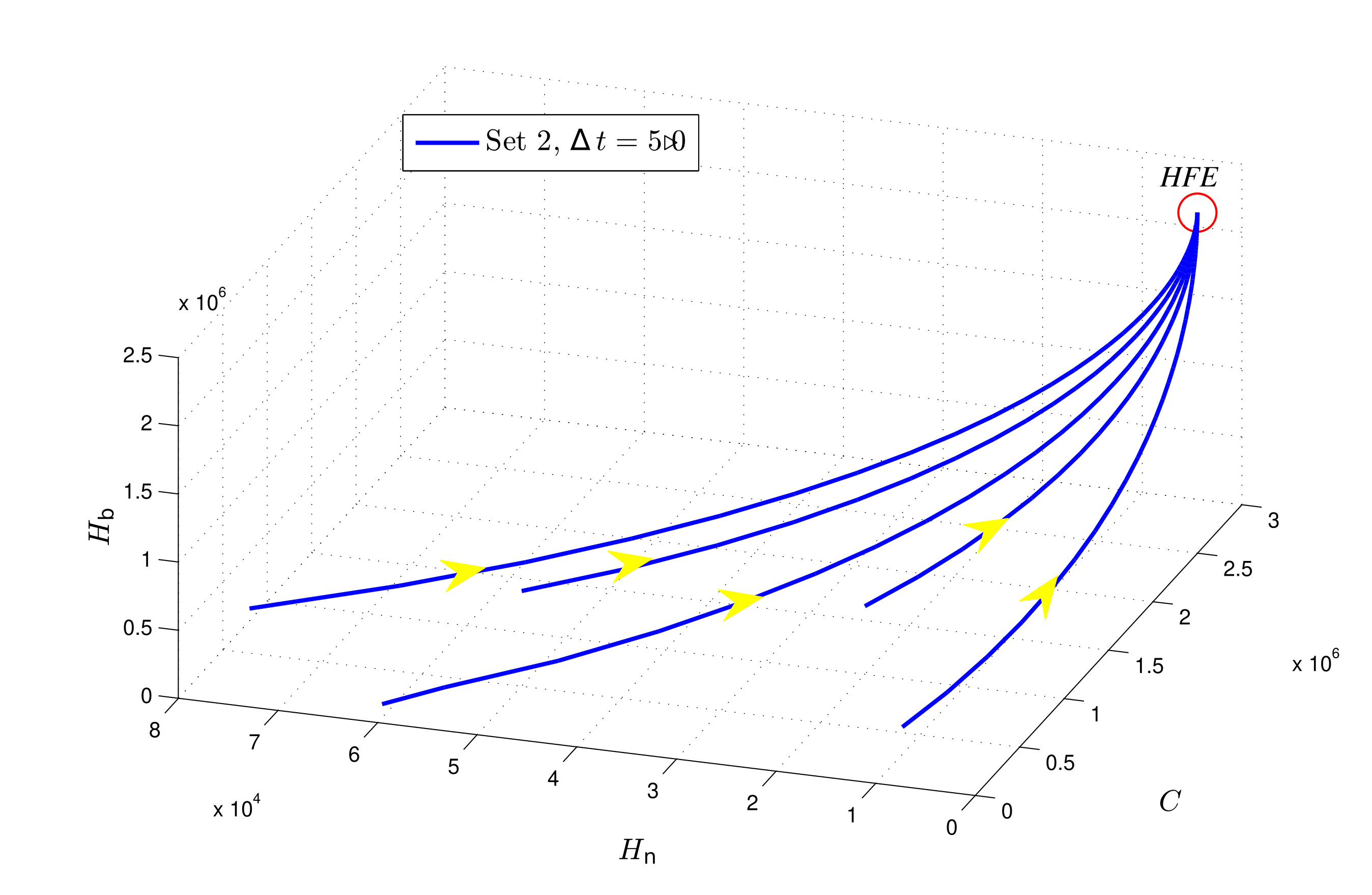}
\label{Figure:2a}
}\hfill
\subfloat[$\Delta t = 1.0$]{%
\includegraphics[height=10cm,width=9cm]{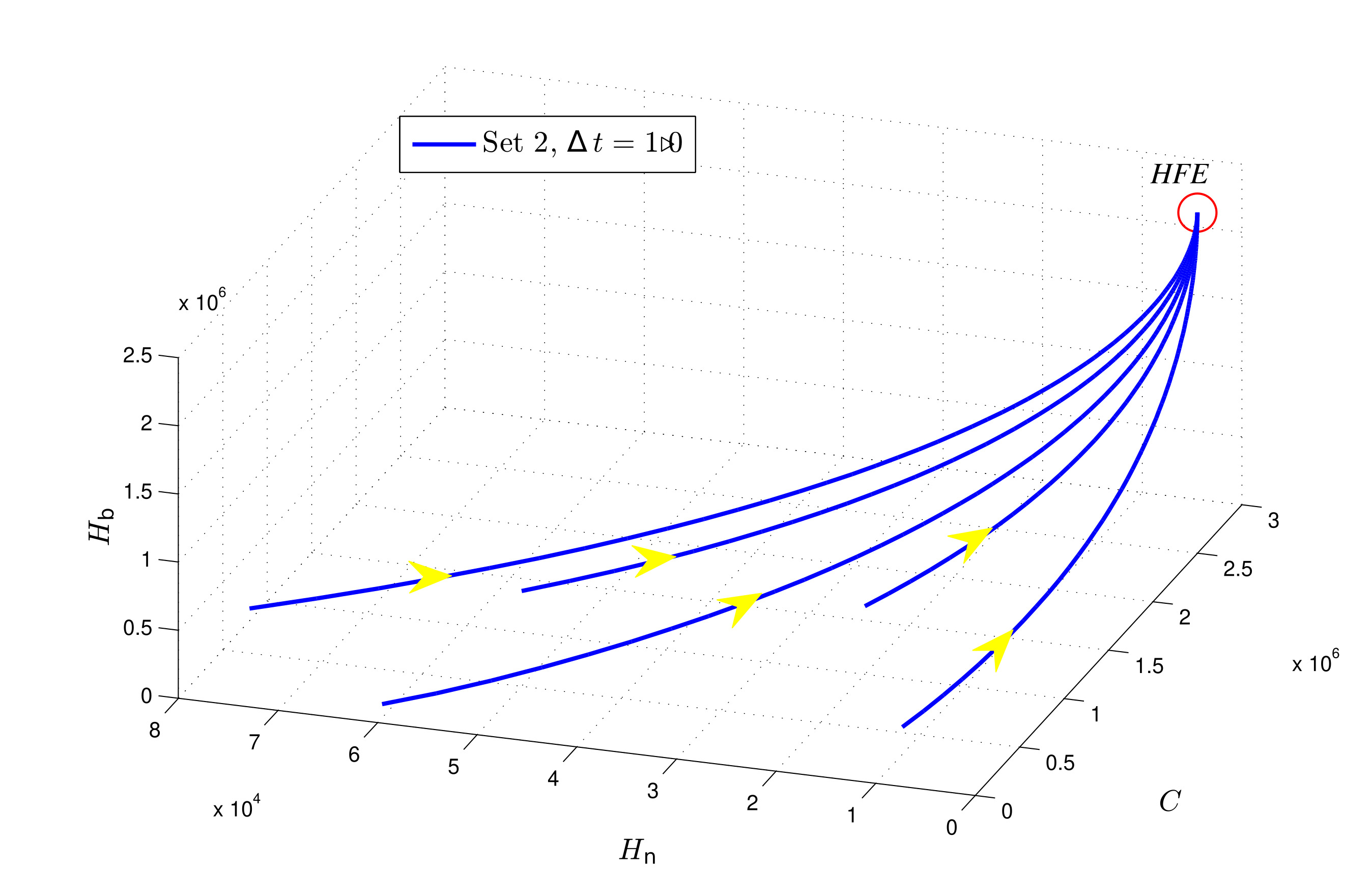}
\label{Figure:2b}
}\hfill
\subfloat[$\Delta t = 10^{-3}$]{%
\includegraphics[height=10cm,width=16cm]{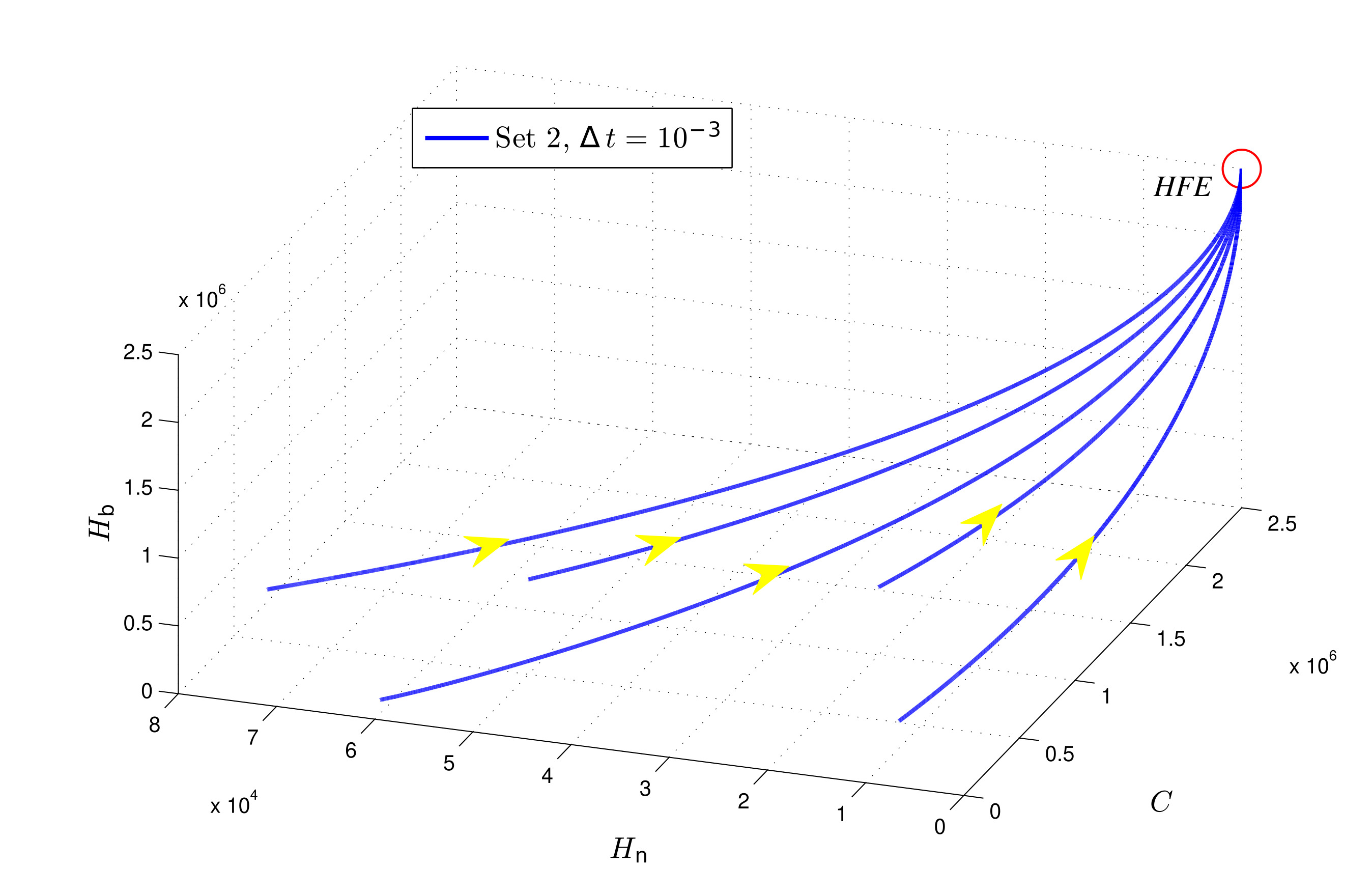}
\label{Figure:2c}
}
\caption{The numerical solutions  generated  by the second-order positivity-preserving NSFD scheme \eqref{eq:NSFD2} using parameter set $2$ in Table \ref{Table5}.}
\label{Fig:2}
\end{figure}
\begin{figure}[H]
\subfloat[$\Delta t = 5.0$]{%
\includegraphics[height=10cm,width=9cm]{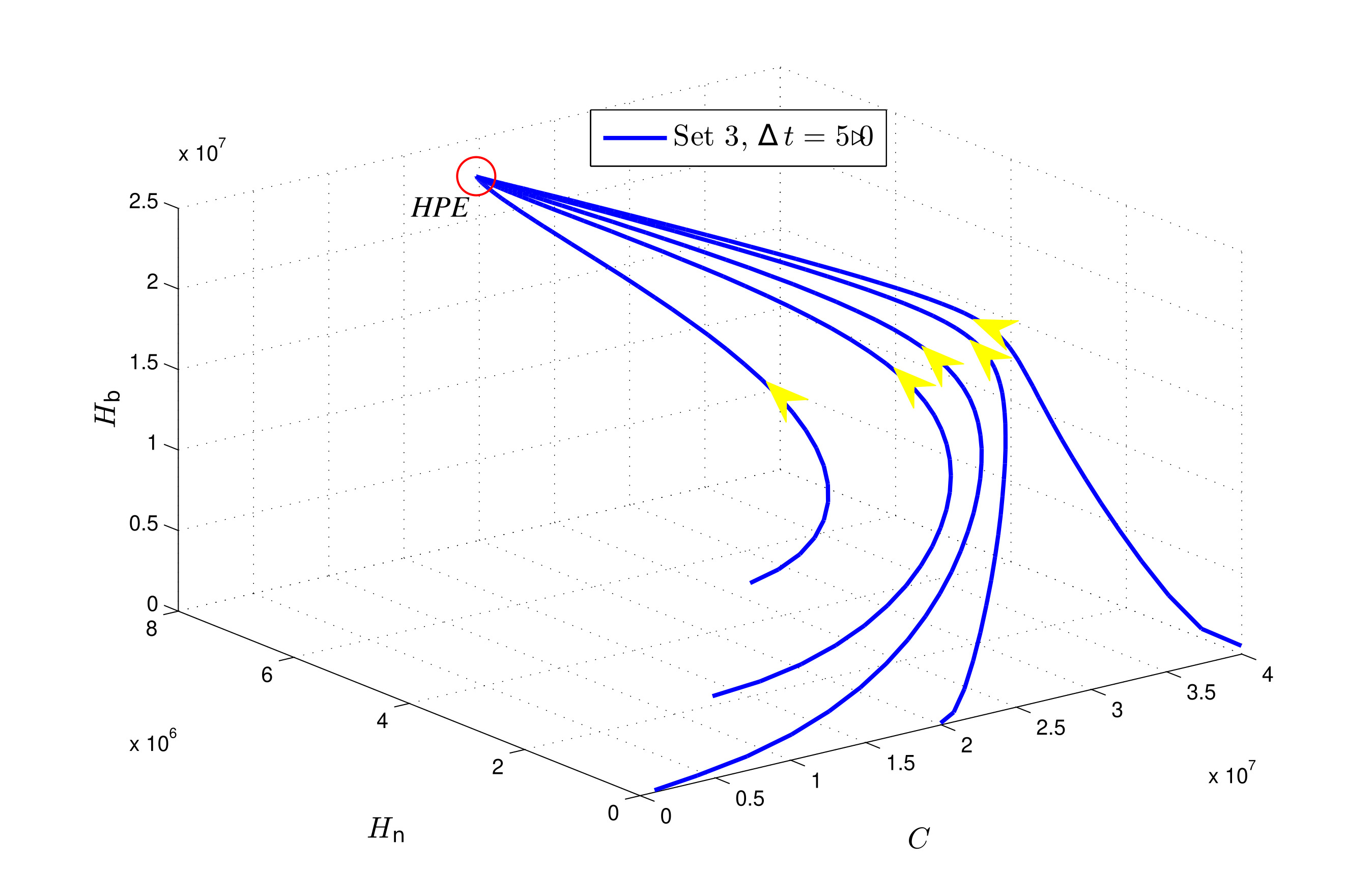}
\label{Figure:3a}
}\hfill
\subfloat[$\Delta t = 1.0$]{%
\includegraphics[height=10cm,width=9cm]{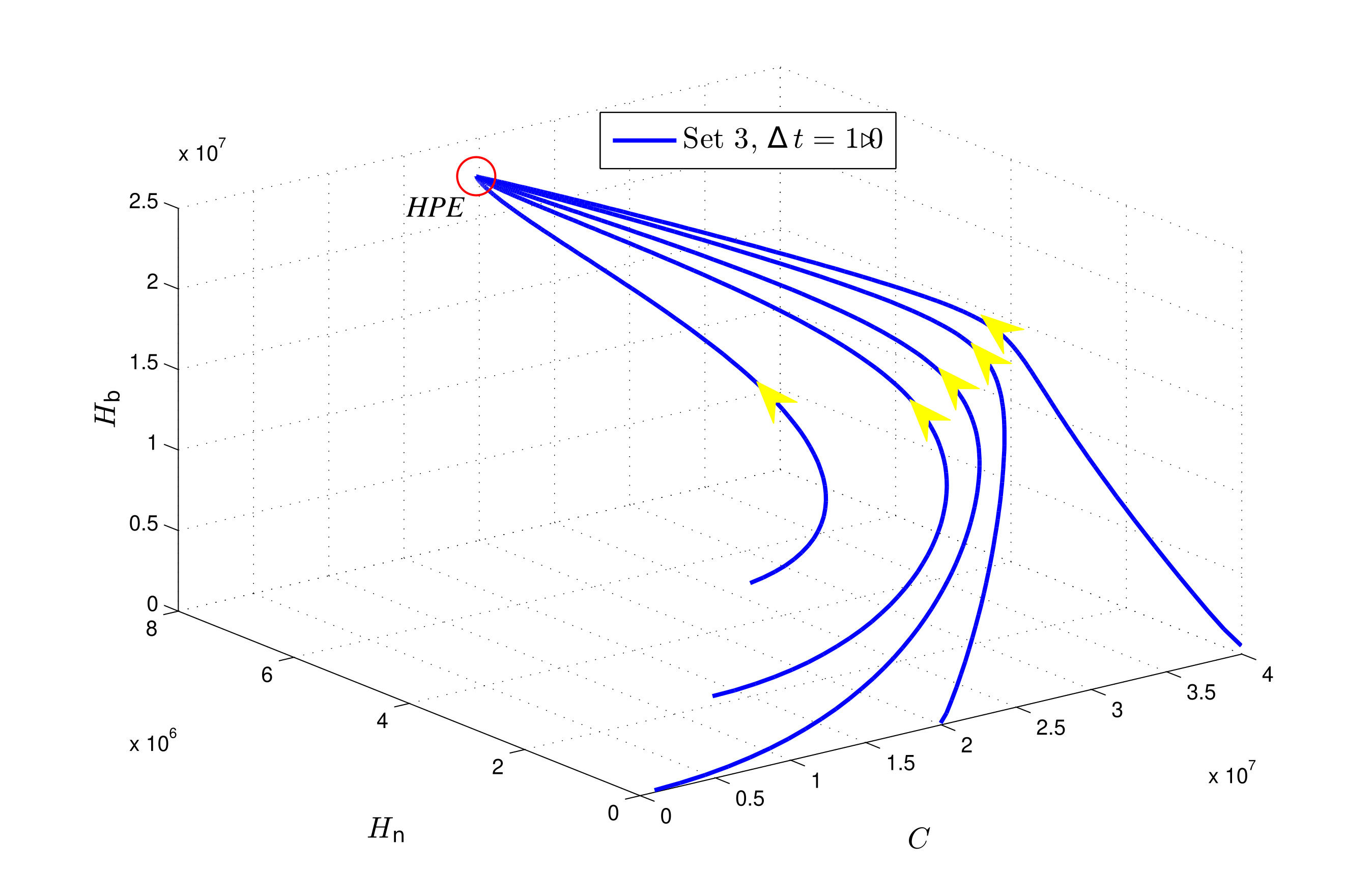}
\label{Figure:3b}
}\hfill
\subfloat[$\Delta t = 10^{-3}$]{%
\includegraphics[height=10cm,width=16cm]{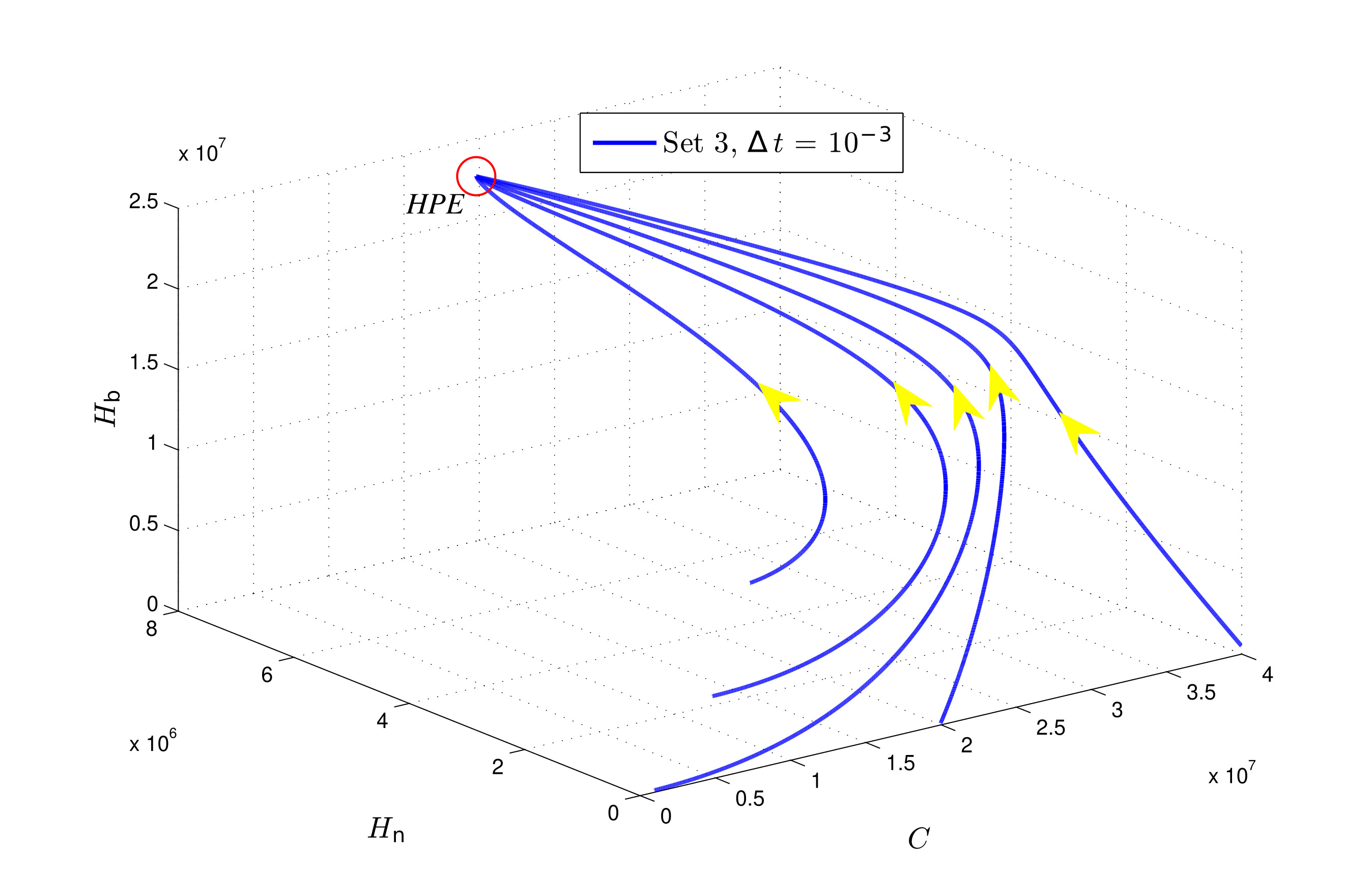}
\label{Figure:3c}
}
\caption{The numerical solutions  generated  by the second-order positivity-preserving NSFD scheme \eqref{eq:NSFD2} using parameter set 3 in Table \ref{Table5}.}
\label{Fig:3}
\end{figure}

\medskip\noindent
\textbf{Ethical Approval:} Not applicable.\\
\textbf{Availability of supporting data:} The data supporting the findings of this study are available within the article [and/or] its supplementary materials.\\
\textbf{Conflicts of Interest:} The author declares no conflicts of interest to disclose.\\
\textbf{Authors' contributions:} \textbf{Manh Tuan Hoang and Matthias Ehrhardt:} 
Writing review \& editing, Writing original draft, Visualization, Validation, Supervision, Software, Resources, Project administration, Methodology, Investigation,
Formal analysis, Data curation, Conceptualization, Funding acquisition.\\
\textbf{Hoai Thu Pham:}
Writing review \& editing, Writing original draft, Visualization, Validation, Software, Resources, Methodology, Investigation,
Formal analysis, Data curation, Conceptualization.\\
\textbf{Funding information:} Not available.


\bibliographystyle{amsalpha}

\begin{thebibliography}{99}

\bibitem{Allen}
L. J. S. Allen, 
An Introduction to Mathematical Biology, Prentice Hall,  2007.

\bibitem{Angstmann}
C. N. Angstmann, A. M. Erickson, B. I. Henry, A. V. McGann, J. M. Murray,  A. James,
Fractional Order Compartment Models, 
SIAM J. Appl. Math. 77 (2017), 430--446.

\bibitem{Ascher}
U. M. Ascher, L. R. Petzold,
Computer Methods for Ordinary Differential Equations and Differential-Algebraic Equations, 
SIAM, Philadelphia, 1998.

\bibitem{Beddington}
J. R. Beddington, 
Mutual Interference Between Parasites or Predators and its Effect on Searching Efficiency, 
J. Animal Ecol. 44 (1975), 331--340.

\bibitem{Bhattacharya}
S. Bhattacharya, K. Gaurav, S. Ghosh, 
Viral marketing on social networks: An epidemiological perspective, 
Physica A 525 (2019), 478--490.

\bibitem{Brauer}
F. Brauer, Compartmental Models in Epidemiology, 
In: Brauer, F., van den Driessche, P., Wu, J. (eds.), 
Mathematical Epidemiology. Lecture Notes in Mathematics 1945. 
Springer, Berlin, Heidelberg.

\bibitem{Capasso}
V. Capasso, G. Serio, 
A generalization of the Kermack-McKendrick deterministic epidemic model, 
Math. Biosci. 42 (1978), 43--61.

\bibitem{Castillo-Chavez}
C. Castillo-Chavez, Z. Feng, W. Huang, On the computation of $R_0$ and its role in global stability, 
in: C. Castillo-Chavez, S. Blower, P. van den Driessche, D. Kirschner, A.-A. Yakubu (Eds.), 
Mathematical Approaches for Emerging and Reemerging Infectious Diseases: An Introduction, Springer, 2002, p. 229.

\bibitem{ConnellMcCluskey}
C. Connell McCluskey, Y. Yang, 
Global stability of a diffusive virus dynamics model with general incidence function and time delay, 
Nonlin. Anal. Real World Appl. 25 (2015), 64--78.

\bibitem{Copper}
G. J. Cooper, J. H. Verner, 
Some Explicit Runge-Kutta Methods of High Order, 
SIAM J. Numer. Anal. 9 (1972), 389--405.

\bibitem{Cresson}
J. Cresson, F. Pierret, 
Non standard finite difference scheme preserving dynamical properties, 
J. Comput. Appl. Math. 303 (2016), 15--30.

\bibitem{Daley}
D. J. Daley, D. G. Kendall, 
Epidemics and rumours,
Nature 204(4963) (1964), 1118.

\bibitem{DeAngelis}
D. L. DeAngelis, R. A. Goldstein, R. V. O'Neill, 
A Model for Tropic Interaction, 
Ecology 56 (1975), 881--892.

\bibitem{delRey}
 A. M. del Rey, 
 Mathematical modeling of the propagation of malware: a review, 
 Secur. Commun. Netw. 8 (2015), 2561--2579.
 
 \bibitem{Dietz}
K. Dietz, 
Epidemics and Rumours: A Survey, 
J. Royal Stat. Soc. Ser. A (General) 130(4) (1967), 505--52.

\bibitem{Dimitrov3}
D. T. Dimitrov and H. V. Kojouharov, {Dynamically consistent numerical methods for general productive-destructive systems}, 
J. Differ. Equ. Appl. {17} (2011) 1721--1736.

\bibitem{Ding}
Y. Ding, L. Zhu, 
Turing instability analysis of a rumor propagation model with time delay on non-network and complex networks, 
Inform. Sci.  667 (2024), 120402.
\bibitem{Goffman}
W. Goffman, V. A. Newill, 
Generalization of Epidemic Theory: An Application to the Transmission of Ideas, 
Nature 204 (1964), 225--228.

\bibitem{Hassouni}
H. Hassouni, A. E. Bhih, O. Balatif, 
Mathematical modeling, analysis, and optimal control of hacker behavior in cybersecurity systems, 
J. Appl. Math. Comput. 72(2026), 32.

\bibitem{HoangMatthias}
M. T. Hoang,  M. Ehrhardt, 
Differential equation models for infectious diseases: Mathematical modeling, qualitative analysis, numerical methods and applications, 
SeMA (2025). 

\bibitem{Hoang2023}
M. T. Hoang, 
Dynamical analysis of a generalized hepatitis B epidemic model and its dynamically consistent discrete model, 
Math. Comput. Simul. 205 (2023), 291--314.

\bibitem{HoangMatthias2026}
M. T. Hoang, M. Ehrhardt, 
A generalized second-order positivity-preserving numerical method for non-autonomous dynamical systems with applications,
Appl. Math. Comput. 524 (2026), 130029.

\bibitem{Hoang2026}
M. T. Hoang, 
Mathematical analysis and numerical simulation of a generalized epidemiological model for malware propagation, 
Nonlin. Dynam. 114 (2026), 53.

\bibitem{Horvath}
Z. Horv\'ath, 
On the positivity step size threshold of Runge-Kutta methods,
Appl. Numer. Math. 53 (2005), 341--356.

\bibitem{Kawachi}
K. Kawachi, 
Deterministic models for rumor transmission, 
Nonlin. Anal. Real World Appl. 9 (2008), 1989--2028.

\bibitem{Kermack-McKendrick}
W. O. Kermack, A. G. McKendrick, 
A contribution to the mathematical theory of epidemics, 
Proc. Royal Soc. London - Ser. A 115 (1927), 700--721.

\bibitem{Kermack-McKendrick1}
W. O. Kermack, A. G. McKendrick, 
Contributions to the mathematical theory of epidemics. II. -The problem of endemicity,
Proc. Royal Soc. London - Ser.  A 138 (1932), 55--83.

\bibitem{Kermack-McKendrick2}
W. O. Kermack, A. G. McKendrick,
Contributions to the mathematical theory of epidemics. III. -Further studies of the problem of endemicity, 
Proc. Royal Soc. London - Ser. A 141 (1933), 94--122. 

\bibitem{Khalil}
H. K. Khalil,
Nonlinear systems, 
Third Edition, Prentice Hall, 2002.

\bibitem{LiTeng}
J. Li, Z. Teng, 
Hopf bifurcation and stability for an SIRS epidemic model incorporating immune waning, logistic growth and saturated treatment, 
Math. Comput. Simul. 247 (2026), 758--784.

\bibitem{Martcheva}
M. Martcheva, 
An Introduction to Mathematical Epidemiology,
Springer New York, NY, 2015.

\bibitem{McNabb} 
A. McNabb, 
Comparison theorems for differential equations, 
J. Math. Anal. Appl. 119 (1986), 417--428.

\bibitem{Mickens1}
R. E. Mickens,
Nonstandard Finite Difference Models of Differential Equations, 
World Scientific, Singapore, 1994.

\bibitem{Mickens2}
R. E. Mickens,
Applications of Nonstandard Finite Difference Schemes,  
World Scientific, Singapore, 2000.

\bibitem{Mickens3}
R. E. Mickens,
Dynamic consistency: a fundamental principle for constructing nonstandard finite difference schemes for differential equations,  
J. Differ. Eqs. Appl. 11 (2005), 645--653.
%
%
\bibitem{Mickens4}
R. E. Mickens,  
Advances in the Applications of Nonstandard Finite Difference Schemes, 
World Scientific, Singapore, 2005.
%

\bibitem{Mickens5}
R. E. Mickens, 
Nonstandard Finite Difference Schemes: Methodology and Applications, 
World Scientific, 2020.

\bibitem{Murray}
W. H. Murray, 
The application of epidemiology to computer viruses,
Comput. Secur. 7 (1988), 139--145.


\bibitem{Patidar1}
K. C. Patidar, 
On the use of nonstandard finite difference methods,
J. Differ. Eqs. Appl. 11 (2005), 735--758.

\bibitem{Patidar2}
K. C. Patidar, 
Nonstandard finite difference methods: Recent trends and further developments, 
J. Differ. Eqs. Appl. 22 (2016), 817--849.

\bibitem{Piqueira1}
J. R. C. Piqueira, O. V, Araujo, A modified epidemiological model for computer viruses,
Appl. Math. Comput. 213 (2009), 355--360.

\bibitem{Piqueira2} 
J. R. C. Piqueira, A. A. de Vasconcelos, C. E. C. J. Gabriel, V. O. Araujo, Dynamic models for computer viruses,
Comput. Secur. 27 (2008), 355-359.

\bibitem{Piqueira3}
 J. R. C. Piqueira, M. A. M. Cabrera, C. M. Batistela, 
 Malware propagation in clustered computer networks, 
 Physica A 573 (2021), 125958.
 
\bibitem{Piqueira4}
J. R. C. Piqueira, M. Zilbovicius, C. M. Batistela, 
Daley-Kendal models in fake-news scenario, 
Physica A 548 (2020), 123406.

\bibitem{Seibert}
P. Seibert, R. Suarez, 
Global stabilization of nonlinear cascade systems, 
Syst. Contr. Lett. 14 (1990), 347--352.

\bibitem{Smith}
H. L. Smith and P. Waltman,  
The Theory of the Chemostat: Dynamics of Microbial Competition,
Cambridge University Press, 2009.

\bibitem{Stuart}
A. Stuart, A. R. Humphries, 
Dynamical systems and numerical analysis,
Cambridge University Press, 1998.

\bibitem{Sun}
H. Sun, J. Wang, 
Dynamics of a diffusive virus model with general incidence function, cell-to-cell transmission and time delay, 
Comput. Math. Appl. 77 (2019), 284--301.

\bibitem{Tian}
Y. Tian, X. Liu,
Global dynamics of a virus dynamical model with general incidence rate and cure rate, 
Nonlin. Anal. Real World Appl. 16 (2014), 17--26.

\bibitem{vdDriessche}
P. van den Driessche, J. Watmough, 
Reproduction numbers and sub-threshold endemic equilibria for compartmental models of disease transmission, 
Math. Biosci. 180 (2002), 29--48.

\bibitem{Wood0}
D. T. Wood, D. T. Dimitrov, and H. V. Kojouharov, 
A nonstandard finite difference method for $n$-dimensional productive-destructive systems, 
J. Differ. Equ. Appl. 21 (2015), 240--254.

\bibitem{Wood1}
D. T. Wood and H. V. Kojouharov, 
A class of nonstandard numerical methods for autonomous dynamical systems, 	
Appl. Math. Lett. {50} (2015), 78--82.

\bibitem{Xu}
R. Xu, Z. Ma, 
An HBV model with diffusion and time delay, 
J. Theor. Biol. 257 (2009), 499--509.

\bibitem{Zhang}
Y. Zhang, Z. Xu, 
Dynamics of a diffusive HBV model with delayed Beddington-DeAngelis response, 
Nonlin. Anal. Real World Appl. 15 (2014), 118--139.

\bibitem{Zhu}
Q. Zhu, G. Zhang, X. Luo, C. Gan, 
An industrial virus propagation model based on SCADA system, 
Inform. Sci. 630 (2023), 546--566.

\end{thebibliography}

\end{document}